\newcommand{\IR}{\mathbb R}
\newcommand{\IN}{\mathbb N}
\newcommand{\IQ}{\mathbb Q}
\newcommand{\IC}{\mathbb C}
\newcommand{\A}{\mathcal A}
\newcommand{\M}{\mathcal M}
\newcommand{\w}{\omega}
\newcommand{\Hom}{\mathrm{Hom}}
\newcommand{\id}{\mathrm{id}}
\newcommand{\e}{\varepsilon}
\newcommand{\cB}{\mathcal B}
\newtheorem{theorem}{Theorem}[section]
\newtheorem{problem}[theorem]{Problem}
\newtheorem{lemma}[theorem]{Lemma}
\newtheorem{corollary}[theorem]{Corollary}
\newtheorem{proposition}[theorem]{Proposition}
\theoremstyle{definition}
\newtheorem{definition}[theorem]{Definition}
\newtheorem{example}[theorem]{Example}
\newtheorem{remark}[theorem]{Remark}
\newcommand{\K}{\mathcal K}
\title[Semitopological modules]{Semitopological modules}
\author{Taras Banakh and Alex Ravsky}
\address{T. Banakh: Ivan Franko National University of Lviv (Ukraine), and Jan Kochanowski University in Kielce (Poland)}
\email{t.o.banakh@gmail.com}
\address{A. Ravsky: Pidstryhach Institute for Applied Problems of Mechanics and Mathematics National Academy of Sciences of Ukraine}
\email{alexander.ravsky@uni-wuerzburg.de}
\subjclass{54C10; 46A16; 46A19; 16W80}
\keywords{Semitopological linear space, semitopological module, Bohr topology}
\dedicatory{Dedicated to the memory of V.K.~Maslyuchenko}
\begin{document}
\begin{abstract} Given a topological ring $R$, we study semitopological $R$-modules, construct their
completions, Bohr and borno modifications. For every topological space $X$, we construct the free
(semi)topological $R$-module over $X$ and prove that for a $k$-space $X$ its  free
semitopological $R$-module is a topological $R$-module. Also we construct a Tychonoff space $X$
whose free semitopological $R$-module is not a topological $R$-module.
\end{abstract}
\maketitle

\section{Introduction}

This paper was motivated by problems on semitopological linear spaces posed by Vo\-lo\-dy\-myr
Kyrylovych Mas\-ly\-uchenko in \cite{Masl}. A {\em semitopological linear space over a topological
field $F$} is a linear space $X$ over the field $F$, endowed with a Hausdorff topology $\tau$
turning $X$ into an Abelian topological group and making the multiplication $F\times X\to X$,
$(\lambda,x)\mapsto\lambda\cdot x$, separately continuous. If this operation is jointly
continuous, then $X$ is a {\em topological linear space} over $F$. By a {\em semitopological linear
space} we understand a semitopological linear space over the field $\IR$ of real numbers endowed with the Euclidean topology. These spaces (under the name ``$N$-spaces") were
introduced and studied by V.K.~Mas\-lyu\-chen\-ko in \cite{Masl}, where he posed the following

\begin{problem}\label{prob} Is each (metrizable) semitopological linear space a topological linear space?
\end{problem}

In \cite{BMR} we proved that each metrizable semitopological linear space is a topological linear
space and also constructed a semitopological linear space which is not a topological linear space.
These results motivate the authors to study semitopological linear spaces in more details and more generality, so the present paper addresses exactly this problem.

In Section~\ref{s3} we introduce semitopological $R$-modules over topological rings $R$, study
their completions (in Theorem~\ref{t3.1}), detect their bounded subsets (in Theorem~\ref{t3.2}),
and establish joint continuity properties of their multiplication maps (in Theorems~\ref{t3.2} and
\ref{t3.4}).

In Section~\ref{s4} we study Bohr modifications  of semitopological modules and construct many
examples of semitopological modules which are not topological modules.

In Section~\ref{s5} we discuss bornologies on topological rings and
construct so-called bornomo\-di\-fi\-ca\-ti\-ons of semitopological modules, turning them into
topological modules.

So, we have two constructions over semitopological modules, which have opposite effects: the
Bohr modifications of  semitopological linear spaces produce semitopological linear spaces which
are not topological linear spaces, whereas the bornomodifications transform semitopological linear
spaces into topological linear spaces.

The properties of the bornomodifications are used in the final Section~\ref{s6} devoted to free semitopological modules over topological spaces. The main result proved in this section is Theorem~\ref{t6.2} implying that the free semitopological linear space over a $k$-space $X$ coincides with the free topological linear space over $X$. On the other hand, the free semitopological linear space over the real line endowed with the Bohr topology is not a topological linear space.

\section{Preliminaries}\label{s2}

In this section we shall make some conventions and recall the necessary information on topological spaces and topological groups.

\subsection{Baire and strongly Baire spaces} {\em All topological spaces considered in this paper are Tychonoff}. For a subset $A$ of a topological space $X$ we denote by $\bar A$ the closure of $A$ in $X$.

A topological space $X$ is called {\em Baire} if for any sequence $(U_n)_{n\in\w}$ of open dense
subsets of $X$ the intersection $\bigcap_{n\in\w}U_n$ is dense in $X$. Baire spaces can be
characterized using the Choquet game. This game is played by two players, I and II on a topological
space $X$. The player I starts the game and chooses a non-empty open set $U_0\subseteq X$. The
player II responds choosing a non-empty open set $V_0\subseteq U_0$. At the $n$th inning the player
I selects a non-empty open set $U_n\subseteq V_{n-1}$ and player II responds by selecting a
non-empty open set $V_n\subseteq U_n$. At the end of the game the player II is declared the winner
if $\bigcap_{n\in\w}U_n\ne\emptyset$. In the opposite case the player I wins the game. By Oxtoby
Theorem \cite[8.11]{Ke}, a topological space $X$ is Baire if and only if the player I has no
winning strategy in the Choquet game on $X$.

Now we recall the definition of a strongly Baire space, introduced by Cao and Moors \cite{CM} via
the game played by two players I and II on a topological space $X$ with a selected dense subset
$D$. The Cao-Moors game has the same moves as the Choquet game and differs merely by the winning
condition. Given a decreasing sequence $U_0\supseteq V_0\supseteq U_1\supseteq V_1\supseteq\cdots$
of open sets constructed by the players I and II, the player II is declared the winner if
$\bigcap_{n\in\w}U_n\ne\emptyset$ and every sequence $(x_n)_{n\in\w}\in\prod_{n\in\w}(V_n\cap D)$ has a cluster point in $X$. A topological space $X$ is called {\em strongly Baire}
if the first player has no winning strategy in the Cao-Moors game for some dense subset $D$ of $X$.

It is easy to see that a metrizable space is Baire if and only if it is strongly Baire.
Strongly Baire spaces are Baire and \v Cech-complete spaces are strongly Baire.

A subset $B$ of a topological space $X$ is called  {\em precompact} if $B$ has the compact closure
$\overline B$ in $X$.

A topological space $X$ is called a {\em $k$-space} if a subset $A\subseteq X$ is closed in $X$ provided for every compact subset $K\subseteq X$ the intersection $A\cap K$ is closed in $K$.  


A topological space $X$ is defined to be
\begin{itemize}
\item {\em first-countably-compact} if for each point $x\in X$ there exists a sequence $(U_n)_{n\in\w}$ of neighborhoods
of $x$ in $X$ such that any sequence $(x_n)_{n\in\w}\in \prod_{n\in\w}U_n$ has a cluster point in $X$;
\item {\em first-countably-pracompact} if for each point $x\in X$ there exist a dense subset $D$ of $X$ and a countable system of neighborhoods $(U_n)_{n\in\w}$ of $x$ in $X$ such that every sequence $(x_n)_{n\in\w}\in \prod_{n\in\w}(U_n\cap D)$ has a cluster point in $X$.
\end{itemize}
The class of first-countably-compact spaces contain all first-countable spaces and all countably
compact spaces. Also it can be shown that each topologically homogeneous strongly Baire space is first-countably-pracompact.

A subset $A$ of a topological space $X$ is called {\em sequentially closed} in $X$ if for any convergent sequence $\{a_n\}_{n\in\w}\subseteq A$ in $X$, the limit point $\lim_{n\to\infty}a_n$ belongs to the set $A$.

\subsection{Co-Namioka spaces}

A topological space $X$ is called {\em co-Namioka} if for any Baire space $Y$ and any separately continuous function $f:X\times Y\to \IR$ there exists a dense $G_\delta$-set $G\subseteq Y$ such that $f$ is continuous at each point of the set $X\times G$.

A continuous map $f:X\to Y$ between topological spaces is {\em perfect} if it is closed and for every $y\in Y$ the preimage $f^{-1}(y)$ is compact.

The proof of the following proposition was suggested by O.V.~Maslyuchenko.

\begin{proposition}\label{p:pcN} Let $f:X\to Z$ be a surjective perfect map between Hausdorff spaces. If the space $X$ is co-Namioka, then so is the space $Z$.
\end{proposition}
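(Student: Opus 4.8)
The plan is to transport the separately continuous function from $Z\times Y$ up to $X\times Y$ along $f$, apply the co-Namioka property of $X$, and then push the resulting continuity back down using the compactness of the fibers and the closedness of $f$.

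First I would fix a Baire space $Y$ and a separately continuous function $g\colon Z\times Y\to\IR$, and form the composition $h:=g\circ(f\times\id)\colon X\times Y\to\IR$, $h(x,y)=g(f(x),y)$. Since $f$ is continuous and $g$ is separately continuous, $h$ is separately continuous: for fixed $y$ the map $x\mapsto g(f(x),y)$ is continuous, and for fixed $x$ the map $y\mapsto g(f(x),y)$ is continuous. Because $X$ is co-Namioka, there is a dense $G_\delta$-set $G\subseteq Y$ such that $h$ is continuous at every point of $X\times G$. I claim that this same $G$ witnesses the co-Namioka property of $Z$ for $g$, and it remains to verify that $g$ is continuous at each point of $Z\times G$.

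The core of the argument is the continuity of $g$ at a fixed point $(z_0,y_0)\in Z\times G$. Given $\e>0$, note that the fiber $f^{-1}(z_0)$ is compact because $f$ is perfect, and that $h(\cdot,y_0)$ is constantly equal to $g(z_0,y_0)$ on it. For each $x\in f^{-1}(z_0)$ the point $(x,y_0)$ lies in $X\times G$, so continuity of $h$ there yields open neighborhoods $O_x\ni x$ in $X$ and $V_x\ni y_0$ in $Y$ with $|h(x',y)-g(z_0,y_0)|<\e$ on $O_x\times V_x$. By compactness, finitely many $O_{x_1},\dots,O_{x_n}$ cover $f^{-1}(z_0)$; putting $O:=\bigcup_{i\le n}O_{x_i}$ and $V:=\bigcap_{i\le n}V_{x_i}$, one gets $|h(x',y)-g(z_0,y_0)|<\e$ for every $(x',y)\in O\times V$.

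Here the perfectness of $f$ is exactly what makes the descent work, and I expect this tube-type step to be the only genuinely delicate point. Since $f$ is closed and $O$ is an open set containing the fiber $f^{-1}(z_0)$, the set $W:=Z\setminus f(X\setminus O)$ is open, contains $z_0$, and satisfies $f^{-1}(W)\subseteq O$. Now for any $(z,y)\in W\times V$, surjectivity of $f$ provides some $x'$ with $f(x')=z$, whence $x'\in f^{-1}(W)\subseteq O$ and $g(z,y)=h(x',y)$ differs from $g(z_0,y_0)$ by less than $\e$. Thus $g$ is continuous at $(z_0,y_0)$, and since this point was arbitrary in $Z\times G$, the dense $G_\delta$-set $G$ witnesses that $Z$ is co-Namioka.
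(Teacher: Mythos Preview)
Your proof is correct and follows essentially the same approach as the paper: lift $g$ to $h=g\circ(f\times\id)$, apply the co-Namioka property of $X$ to obtain a dense $G_\delta$-set $G$, and then, for a point $(z_0,y_0)\in Z\times G$, use the compactness of the fiber $f^{-1}(z_0)$ together with a finite-cover tube argument and the closedness of $f$ to push continuity down to $g$. The only cosmetic difference is that the paper centers the $\e$-intervals at $\psi(x,g)$ for $x\in f^{-1}(z)$ rather than directly at $\varphi(z,g)$, but since these values coincide on the fiber the arguments are the same.
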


\begin{proof} Fix any separately continuous map $\varphi:Z\times Y\to \IR$ and consider the separately continuous function
$$\psi:X\times Y\to \IR,\quad \psi:(x,y)\mapsto \varphi(f(x),y).$$
Since $X$ is co-Namioka, there exists a dense $G_\delta$-set $G\subseteq Y$ such that the function $\psi$ is continuous at each point of the set $X\times G$. We claim that the function $\varphi$ is continuous at each point $(z,g)$ of the set $Z\times G$.

By the continuity of the function $\psi$ at points of the set $X\times \{g\}$, for every $\e>0$ and every $x\in X$ there exists a neighborhood $O_{(x,g)}\subseteq X\times G$ of the point $(x,g)\in X\times G$ such that $\psi(O_{(x,g)})\subseteq (\psi(x,g)-\e,\psi(x,g)+\e)$. We lose no generality assuming that $O_{(x,g)}$ is of basic form $O_x\times G_{x}$ for some open sets $O_x\subseteq X$ and $G_x\subseteq Y$. By the compactness of the set $f^{-1}(z)$ (which follows from the perfectness of $f$), there exists a finite set $F\subseteq f^{-1}(z)$ such that $f^{-1}(z)\subseteq \bigcup_{x\in F}O_x$. Since the map $f$ is closed, the set
$$U=Z\setminus f(X\setminus \bigcup_{x\in F}O_x)$$is an open neighborhood of $z$ in the space $Z$. Consider the open neighborhood $V=\bigcap_{x\in F}G_x$ of $g$ in $Y$. We claim that $\varphi(U\times V)\subseteq(\varphi(z,g)-\e,\varphi(z,g)+\e)$. Indeed, for any $(u,v)\in U\times V$ we have $f^{-1}(u)\subseteq\bigcup_{x\in F}O_x$ and hence there exist $x\in F$ and $w\in O_x$ such that $f(w)=u$. Then
$$
\begin{aligned}
\varphi(u,v)&=\varphi(f(w),v)=\psi(w,v)\in\psi(O_x\times G_x)=\psi(O_{(x,g)})\subseteq(\psi(x,g)-\e,\psi(x,g)+\e)=\\
&=(\varphi(f(x),g)-\e,\varphi(f(x),g)+\e)=(\varphi(z,g)-\e,\varphi(z,g)+\e).
\end{aligned}$$
Therefore, the map $\varphi$ is continuous at points of the set $Z\times G$, witnessing that the space $Z$ is co-Namioka.
\end{proof}

A Hausdorff space $X$ is called
\begin{itemize}
\item {\em dyadic compact} if for some cardinal $\kappa$ there exists a continuous surjective map $f:\{0,1\}^\kappa\to X$;
\item {\em locally dyadic} if each point $x\in X$ has a closed neighborhood which is dyadic compact;
\item {\em Lindel\"of} if each open cover of $X$ contains a countable subcover.
\end{itemize}

\begin{lemma}\label{l:dcN} Every diadyc compact space $X$ is co-Namioka.
\end{lemma}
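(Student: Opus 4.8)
The plan is to reduce everything to the single test space $\{0,1\}^\kappa$ and then apply Proposition~\ref{p:pcN}. If $X$ is dyadic compact, fix a continuous surjection $q:\{0,1\}^\kappa\to X$. Since $\{0,1\}^\kappa$ is compact and $X$ is Hausdorff, $q$ is closed and has compact fibers, hence perfect. So by Proposition~\ref{p:pcN} it suffices to prove that the Cantor cube $K:=\{0,1\}^\kappa$ is co-Namioka, and the co-Namioka property of $X$ follows at once.

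So fix a Baire space $Y$ and a separately continuous $f:K\times Y\to\IR$. For $\e>0$ let $O_\e\subseteq K\times Y$ be the open set of points at which the oscillation of $f$ is $<\e$, and put $U_\e:=\{y\in Y:K\times\{y\}\subseteq O_\e\}$. By compactness of $K$ and the tube lemma, $U_\e$ is open, and $\bigcap_nU_{1/n}$ is precisely the set of $y$ at which $f$ is continuous at every point of $K\times\{y\}$; thus it remains to prove that each $U_\e$ is dense, for then $G:=\bigcap_nU_{1/n}$ is the required dense $G_\delta$-set. For a finite $S\subseteq\kappa$ and $\delta>0$ set
$$A_{S,\delta}:=\{y\in Y:\ |f(x,y)-f(x',y)|\le\delta\ \text{ whenever }x|_S=x'|_S\}.$$
Each $A_{S,\delta}$ is closed, and since every slice $f(\cdot,y)$ is uniformly continuous on the compact $K$, for each $\delta$ we have $Y=\bigcup_SA_{S,\delta}$. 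The decisive feature of the Cantor cube is that $\{0,1\}^S$ is \emph{finite}: choosing a representative $\widehat s\in K$ for each $s\in\{0,1\}^S$, the map $y\mapsto f(\widehat s,y)$ is continuous, and as $K$ is the disjoint union of the finitely many clopen boxes $[s]$, any open $W\subseteq A_{S,\delta}$ forces the oscillation of $f$ to be $\le2\delta$ on $K\times W$. Taking $\delta<\e/2$, every such $W$ lies in $U_\e$, so density of $U_\e$ reduces to the following: for each nonempty open $W_0\subseteq Y$ there are a nonempty open $W\subseteq W_0$ and a finite $S$ with $W\subseteq A_{S,\delta}$.

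When $\kappa$ is countable this is immediate, and I expect the countable case to be routine: there are only countably many finite $S$, so if no such pair existed, each $Y\setminus A_{S,\delta}$ would be dense open in the Baire space $W_0$, and any point of their intersection would lie in no $A_{S,\delta}$, contradicting uniform continuity of its slice. The main obstacle is the uncountable case, where the closed cover $\{A_{S,\delta}\}$ is too large for the Baire category theorem. Here I would invoke the game characterization of Baire spaces (Oxtoby's theorem): assuming no good pair $(W,S)$ exists in $W_0$, each $Y\setminus A_{S,\delta}$ is dense open in $W_0$, and I would build a strategy for player~I in the Choquet game on $W_0$ which, after each move $V_n$ of player~II, picks $y_n\in V_n\setminus A_{S_n,\delta}$ with witnesses $x_n,x_n'$, truncates them to a common finite support $S_{n+1}\supseteq S_n$ (using uniform continuity of $f(\cdot,y_n)$), and then plays a small $U_{n+1}\subseteq V_n$ on which $|f(x_n,\cdot)-f(x_n',\cdot)|>\delta/2$. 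Since $W_0$ is Baire, player~I has no winning strategy, so some play yields $y^*\in\bigcap_nU_n$; if the truncations are arranged so that $x_n,x_n'$ converge to a common point $x^*\in K$, then $f(x_n,y^*)$ and $f(x_n',y^*)$ both tend to $f(x^*,y^*)$, contradicting $|f(x_n,y^*)-f(x_n',y^*)|>\delta/2$. The delicate step, and where I expect the genuine work to lie, is the coordinate bookkeeping that guarantees this common limit $x^*$, which is the standard technical core of Namioka--Saint-Raymond type arguments.
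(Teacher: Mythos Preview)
Your reduction to the Cantor cube via Proposition~\ref{p:pcN} is exactly the paper's proof: a continuous surjection from a compact space onto a Hausdorff space is closed with compact fibers, hence perfect, so the co-Namioka property descends. At that point the paper simply cites \cite{Bouziad} for the fact that $\{0,1\}^\kappa$ is co-Namioka and is done.

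You instead attempt to reprove Bouziad's result from scratch. Your argument for countable $\kappa$ is fine, but for uncountable $\kappa$ you give only a sketch and explicitly flag the crucial step --- arranging the witnesses $x_n,x_n'$ so that both sequences converge to a \emph{common} limit $x^*$ --- as ``where I expect the genuine work to lie''. That is an honest assessment, but it means the proof is incomplete precisely at the point that carries all the content: dyadic compacta are interesting exactly when $\kappa$ is uncountable, and the coordinate bookkeeping you defer is the entire substance of the Saint-Raymond/Bouziad argument. As written, your Choquet-game outline does not specify how $S_{n+1}$ is chosen so that the modified witnesses still satisfy $|f(x_n,y)-f(x_n',y)|>\delta/2$ on $U_{n+1}$ \emph{and} eventually agree on every coordinate; getting both simultaneously requires a careful interleaving (typically one fixes the values of $x_n,x_n'$ on $S_n$ to match the previous witnesses and forces them to agree with a fixed base point outside a slightly larger finite set determined by the uniform continuity of $f(\cdot,y_n)$). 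Without that, there is no contradiction at $y^*$.

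So: either carry out the bookkeeping in full, or do what the paper does and invoke \cite{Bouziad}.
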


\begin{proof} Find a cardinal $\kappa$ and a continuous surjective map $f:\{0,1\}^\kappa\to X$. By \cite{Bouziad}, the Cantor cube $\{0,1\}^\kappa$ is co-Namioka and by Proposition~\ref{p:pcN} so is its continuous image $X$.
\end{proof}

\begin{proposition} Every Lindel\"of locally dyadic Hausdorff space $X$ is co-Namioka.
\end{proposition}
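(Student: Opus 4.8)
The plan is to reduce the statement to the dyadic compact case handled by Lemma~\ref{l:dcN}, using the Lindel\"of property to convert the local dyadic structure into a \emph{countable} cover, and then to glue the resulting dense $G_\delta$-sets together. I would first exploit local dyadicity: every point $x\in X$ has a closed neighborhood $K_x$ that is dyadic compact, so in particular $x\in\mathrm{int}(K_x)$. The family $\{\mathrm{int}(K_x):x\in X\}$ is an open cover of $X$, and since $X$ is Lindel\"of it has a countable subcover. This yields a sequence $(K_n)_{n\in\w}$ of dyadic compact (closed) subsets of $X$ with $X=\bigcup_{n\in\w}\mathrm{int}(K_n)$.

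Next, fix any Baire space $Y$ and any separately continuous function $f:X\times Y\to\IR$. For each $n$ the restriction $f|_{K_n\times Y}$ is again separately continuous, and since $K_n$ is dyadic compact it is co-Namioka by Lemma~\ref{l:dcN}; hence there is a dense $G_\delta$-set $G_n\subseteq Y$ such that $f|_{K_n\times Y}$ is continuous at every point of $K_n\times G_n$. I then set $G=\bigcap_{n\in\w}G_n$. Writing each $G_n$ as a countable intersection of open sets, which are automatically dense because $G_n$ is, and using that $Y$ is Baire, I conclude that $G$ is a countable intersection of open dense sets, hence a dense $G_\delta$-subset of $Y$.

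It remains to verify that $f$ is continuous at every point $(x,g)\in X\times G$. I would choose $n$ with $x\in\mathrm{int}(K_n)$; since $g\in G\subseteq G_n$, the restriction $f|_{K_n\times Y}$ is continuous at $(x,g)$. The one genuine (if minor) subtlety is upgrading this \emph{relative} continuity to continuity of $f$ on all of $X\times Y$: this is precisely where the cover by \emph{interiors} of the dyadic compact neighborhoods, rather than by the neighborhoods themselves, does the work. Because $x$ lies in the interior of $K_n$, any neighborhood $W$ of $(x,g)$ in $K_n\times Y$ witnessing relative continuity can be intersected with the open set $\mathrm{int}(K_n)\times Y$; the result is open in $X\times Y$, contains $(x,g)$, and is contained in $K_n\times Y$, so it witnesses continuity of $f$ at $(x,g)$ in $X\times Y$. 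Therefore $f$ is continuous at each point of $X\times G$, and $X$ is co-Namioka.

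I expect no serious obstacle: the only place requiring care is the locality-of-continuity argument of the last paragraph, and it is exactly to enable this argument that one must build the countable cover from the interiors $\mathrm{int}(K_n)$, which is available because local dyadicity guarantees dyadic compact \emph{neighborhoods}.
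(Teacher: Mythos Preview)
Your proposal is correct and follows essentially the same approach as the paper: cover $X$ by countably many dyadic compacta whose interiors cover $X$, apply Lemma~\ref{l:dcN} on each piece, intersect the resulting dense $G_\delta$-sets using that $Y$ is Baire, and use the interior condition to pass from relative continuity to continuity on $X\times Y$. You are a bit more explicit about the ``interior'' subtlety than the paper, but the argument is the same.
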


\begin{proof}  Let $Y$ be a Baire space and $f:X\times Y\to \IR$ be a separately continuous function. Taking into account that the space $X$ is Lindel\"of and locally dyadic, we can find a countable cover $\K$ of $X$ by dyadic compact subsets whose interiors cover the space $X$. By Lemma~\ref{l:dcN}, every space $K\in\K$
is co-Namioka. So, we can find a dense $G_\delta$-set $G_K$ in $Y$ such that the function $f{\restriction}_{K\times Y}$ is continuous at each point of the set $K\times G_K$. Since the space $Y$ is Baire, the countable intersection $G=\bigcap_{K\in\K}G_K$ is a dense $G_\delta$-set in $Y$. We claim that the function $f$ is continuous at each point of the set $X\times G$. Given any point $(x,y)\in X\times G$, find a dyadic compact set $K\in\K$ containing $x$ in its interior in $X$. Then $K\times Y$ is a neighborhood of the point $(x,y)$ in $X\times Y$. Since $G\subseteq G_K$, the function $f{\restriction}_{K\times Y}$ is continuous at $(x,y)$ and so is the function $f$.
\end{proof}

\subsection{Topological groups} In this subsection we recall some information related to topological groups. 
For two subsets $A,B\subseteq X$ we denote by $AB=\{ab:a\in A,\;b\in B\}$ their pointwise product in $X$. 

For topological groups $X,Y$ we denote by $\Hom_p(X,Y)$ the subspace of the Tychonoff power $Y^X$
consisting of continuous homomorphisms from $X$ to $Y$.

A topological group $X$ is {\em complete} if $X$ is closed in any topological group, containing $X$ as a subgroup. It is well-known \cite[\S3.6]{AT} that each topological group $X$ can be identified with a dense subgroup of a complete topological group $\breve X$ called {\em the Ra\u\i kov completion} of $X$. It can be constructed as the completion of $X$ by the two-sided uniformity (which is generated by the base consisting of the entourages $\{(x,y)\in X\times X: x\in yU\cap Uy\}$ where $U$ is a neighborhood of the
identity in $X$).

The Ra\u\i kov completion has the following extension property:  each continuous homomorphism $h:X\to Y$ to a complete topological group $Y$ uniquely extends to a continuous homomorphism $\breve h:\breve X\to Y$.

A subset $A$ of a topological group $X$ is {\em totally bounded} if for each neighborhood $U$ of
the identity of $X$ there exists a finite subset $F\subseteq X$ such that $A$ is contained in $\bigcup_{x\in
F}xU\cap Ux$. It is known that a subset  of a topological group $X$ is totally bounded if and only
if it has compact closure  in the Ra\u\i kov completion of $X$. In particular, a topological group
is compact if and only if it is complete and totally bounded.

A subset $A$ of a topological group $X$ is {\em left} (resp. {\em right\/}) {\em $\w$-narrow} if for each neighborhood $U$ of the identity of $X$ there exists a countable subset
$C\subseteq X$ such that $A$ is contained in $\bigcup_{x\in C}xU$ (resp. $\bigcup_{x\in C}Ux$).
It is easy to check that a subset $A$ of a topological group $X$ is left $\w$-narrow iff
a subset $A^{-1}$ of $X$ is right $\w$-narrow.
A subset $A$ of a topological group $X$ is {\em $\w$-narrow}, if $A$ is both left and right
$\w$-narrow.

\begin{proposition} Let $A$ and $B$ be left $\w$-narrow subsets of a  topological group $X$. Then
$AB$ is a left $\w$-narrow subset of $X$ too.
\end{proposition}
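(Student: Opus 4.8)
The plan is to verify the left $\w$-narrowness of $AB$ directly from the definition: given an arbitrary neighborhood $U$ of the identity $e$ of $X$, I will produce a countable set $D\subseteq X$ with $AB\subseteq\bigcup_{z\in D}zU$. First I fix a neighborhood $V$ of $e$ with $VV\subseteq U$, which exists by the continuity of the group multiplication at $(e,e)$.

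The first step uses the left $\w$-narrowness of $B$ to replace $B$ by a countable ``skeleton'': there is a countable set $C_B\subseteq X$ with $B\subseteq\bigcup_{c\in C_B}cV$, and hence $AB\subseteq\bigcup_{c\in C_B}AcV$. Thus it suffices to cover each of the countably many sets $AcV$ by a countable family of left translates of $U$, since a countable union of countable families of left translates is again countable.

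The key step, and what I expect to be the main obstacle, is that left $\w$-narrowness of $A$ controls $A$ by \emph{left} translates, whereas forming the product $Ac$ multiplies on the \emph{right}; in a non-abelian group the two kinds of translation do not interact directly. The device that resolves this is conjugation: since the inner automorphism $x\mapsto cxc^{-1}$ is a homeomorphism of $X$, the set $cVc^{-1}$ is again a neighborhood of $e$. Applying the left $\w$-narrowness of $A$ to this neighborhood yields a countable set $C_{A,c}\subseteq X$ with $A\subseteq C_{A,c}(cVc^{-1})$. Multiplying on the right by $c$ and using the identity $(cVc^{-1})c=cV$, I obtain $Ac\subseteq C_{A,c}cV$, and therefore $AcV\subseteq C_{A,c}cVV\subseteq C_{A,c}cU$.

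Combining the two steps gives $AB\subseteq\bigcup_{c\in C_B}AcV\subseteq\bigcup_{c\in C_B}C_{A,c}cU$, so the countable set $D=\bigcup_{c\in C_B}\{zc:z\in C_{A,c}\}$ satisfies $AB\subseteq\bigcup_{z\in D}zU$. As $U$ was an arbitrary neighborhood of $e$, this shows that $AB$ is left $\w$-narrow. I anticipate no further subtleties beyond routine bookkeeping with set products; the only genuinely nontrivial idea is the conjugation trick, which converts right multiplication by $c$ into a neighborhood of $e$ to which the hypothesis on $A$ can be applied.
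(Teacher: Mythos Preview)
Your proof is correct and follows essentially the same approach as the paper: both arguments first cover $B$ by countably many left translates of a neighborhood $V$ with $VV\subseteq U$, and then use the conjugation trick to handle the right multiplication by each skeleton point (the paper chooses a neighborhood $V_d$ with $d^{-1}V_dd\subseteq V$, which amounts to taking $V_d\subseteq dVd^{-1}$, exactly your $cVc^{-1}$). The only difference is notational.
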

\begin{proof} Let $U$ be any neighborhood of the identity of $X$. Pick a neighborhood $V$
of the identity of $X$ such that $VV\subseteq U$. There exists a countable subset $D$ of $X$
such that $B\subseteq DV$. For each element $d\in D$ pick a
neighborhood $V_d$ of the identity of $X$ such that $d^{-1}V_dd\subseteq V$.
There exists a countable subset $C_d$ of $X$ such that $A\subseteq C_dV_d$.
Put $C=\left(\bigcup_{d\in D} C_d\right)D$. Let $a\in A$, $b\in B$ be any elements.
There exists $d\in D$ such that $b\in dV$. There exists $c\in C_d$ such that $a\in cV_d$. Then
$$ab\in cV_d\cdot dV=cd\cdot d^{-1}V_ddV\subseteq cdVV\subseteq cdU\subseteq CU.$$
\end{proof}


\begin{corollary}\label{cor:wnarrowunion} For any $\w$-narrow subset $A$ of a  topological group $X$, the subgroup  of $X$ generated by $A$ is  $\w$-narrow.\hfill $\square$
\end{corollary}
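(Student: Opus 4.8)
The plan is to build $\langle A\rangle$ up from $A$ using three closure properties of the class of $\w$-narrow subsets of $X$: closure under passing to inverses, closure under countable unions, and closure under products. The last of these is exactly the content of the Proposition just proved, so the work reduces to combining it with the other two (elementary) facts.

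First I would record that the class of $\w$-narrow sets is closed under inverses. Indeed, the remark preceding the Proposition gives that $A$ is left $\w$-narrow iff $A^{-1}$ is right $\w$-narrow; applying this in both directions shows that if $A$ is $\w$-narrow, then so is $A^{-1}$. Next I would check closure under countable unions: if the sets $(A_n)_{n\in\w}$ are all left $\w$-narrow and $U$ is a neighborhood of the identity, choose countable sets $C_n$ with $A_n\subseteq C_nU$; then $\bigcup_{n\in\w}A_n\subseteq\big(\bigcup_{n\in\w}C_n\big)U$ with $\bigcup_{n\in\w}C_n$ countable, so $\bigcup_{n\in\w}A_n$ is left $\w$-narrow, and symmetrically for the right-hand side. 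Hence a countable union of $\w$-narrow sets is $\w$-narrow.

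Then I would put $B=A\cup A^{-1}\cup\{e\}$, where $e$ denotes the identity of $X$. Since singletons are trivially $\w$-narrow and both $A$ and $A^{-1}$ are $\w$-narrow, the finite union $B$ is $\w$-narrow. The Proposition shows that the product of two left $\w$-narrow sets is left $\w$-narrow; its right-handed analogue follows by applying the Proposition to inverses, using $(AB)^{-1}=B^{-1}A^{-1}$. Thus the product of two $\w$-narrow sets is $\w$-narrow, and by induction $B^n$ is $\w$-narrow for every $n\in\w$. Because $e\in B$, the sets $B^n$ increase with $n$, and $\bigcup_{n\in\w}B^n$ is precisely the subgroup $\langle A\rangle$ generated by $A$. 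By the closure under countable unions established above, this union is $\w$-narrow, which is the desired conclusion.

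The argument is essentially routine, and I do not expect a serious obstacle. The only point requiring a little care is the right-handed version of the product lemma, which is not stated explicitly in the Proposition and must be extracted from the left-handed version through the inverse correspondence $A\mapsto A^{-1}$ together with the closure of $\w$-narrowness under inverses.
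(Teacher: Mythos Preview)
Your proposal is correct and is exactly the intended unpacking of the corollary: the paper gives no proof beyond the \hfill$\square$, treating the result as immediate from the preceding Proposition, and your argument (inverses, countable unions, products, then $\langle A\rangle=\bigcup_n B^n$) is precisely the routine derivation one has in mind.
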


\subsection{Bi-homomorphisms}

Let $X,Y,Z$ be topological groups. A function $h:X\times Y\to Z$ is called a {\em bi-homomorphism}
if for every $a\in X$ and $b\in Y$ the functions $${}^a\!h:Y\to Z,\;\;{}^a\!h:y\mapsto h(a,y)\mbox{
\ and \  } h_b:X\to Z,\;\;h_b:x\mapsto h(x,b),$$ are homomorphisms.

A bi-homomorphism $h:X\times Y \to Z$ is called {\em right-continuous} if for any $x\in X$ the homomorphism ${}^x\!h:Y\to Z$, ${}^x\!h:y\mapsto h(x,y)$, is continuous.

\begin{theorem}\label{p2.1} Let $X,Y,Z$ be topological groups and $\breve Y,\breve Z$ be the Ra\u\i kov completions of $X,Y$, respectively. Any right-continuous bi-homomorphism $h:X\times Y\to Z$
uniquely extends to a right-continuous bi-homomorphism $\breve h:X\times \breve Y\to\breve Z$. The
set $\widetilde Y=\{y\in\breve Y:\breve h_y\mbox{ is continuous}\}$ is a subgroup of $\breve Y$. If $X$ is Baire and $\w$-narrow, then the subgroup $\widetilde Y$ is sequentially closed in $\breve Y$. If
$h$ is jointly continuous, then $\widetilde Y=\breve Y$ and the bi-homomorphism $\breve h$ is
jointly continuous.
\end{theorem}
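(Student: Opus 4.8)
The plan is to construct $\breve h$ one slice at a time. For fixed $x\in X$ the map ${}^x\!h:Y\to Z\subseteq\breve Z$ is a continuous homomorphism into the complete group $\breve Z$, so the extension property of the Ra\u\i kov completion extends it uniquely to a continuous homomorphism $\breve Y\to\breve Z$; I would take $\breve h(x,\cdot)$ to be this extension. By construction $\breve h$ is right-continuous, and any right-continuous extension must agree with it slice-wise by uniqueness of completions, giving uniqueness of $\breve h$. To see $\breve h$ is a bi-homomorphism it remains to check that $\breve h_{\tilde y}:x\mapsto\breve h(x,\tilde y)$ is a homomorphism for each $\tilde y\in\breve Y$: fixing $x_1,x_2\in X$, the maps $\tilde y\mapsto\breve h(x_1x_2,\tilde y)$ and $\tilde y\mapsto\breve h(x_1,\tilde y)\breve h(x_2,\tilde y)$ are continuous on $\breve Y$ and coincide on the dense subgroup $Y$, hence coincide on all of $\breve Y$ since $\breve Z$ is Hausdorff.

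Next, for the subgroup claim, I would use that each slice $\breve h(x,\cdot)$ is a homomorphism, so $\breve h_{y_1y_2}(x)=\breve h_{y_1}(x)\,\breve h_{y_2}(x)$ and $\breve h_{y_1^{-1}}(x)=\breve h_{y_1}(x)^{-1}$ for every $x$. Thus for $y_1,y_2\in\widetilde Y$ the map $\breve h_{y_1y_2}$ is the pointwise product, and $\breve h_{y_1^{-1}}$ the pointwise inverse, of continuous maps into the topological group $\breve Z$, hence continuous; together with the fact that $\breve h_{e}$ is constant this shows $\widetilde Y$ is a subgroup.

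The heart of the theorem is the sequential closedness, and this is the step I expect to be hardest. Given $y_n\to y$ in $\breve Y$ with $y_n\in\widetilde Y$, right-continuity of $\breve h$ gives that $f_n:=\breve h_{y_n}$ converges pointwise to $f:=\breve h_y$, so the task reduces to the Banach--Steinhaus-type statement that a pointwise limit of continuous homomorphisms $f_n:X\to\breve Z$ is continuous when $X$ is Baire and $\w$-narrow; as $f$ is a homomorphism it suffices to prove continuity at the identity. Fixing a neighbourhood $W\ni e$ in $\breve Z$ and a symmetric $V$ with $V^3\subseteq W$, I would set $D_k=\{x\in X:f_n(x)^{-1}f_m(x)\in V$ and $f_n(x)f_m(x)^{-1}\in V$ for all $n,m\ge k\}$. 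Each $D_k$ is closed, and since every sequence $(f_n(x))_n$ converges and is therefore Cauchy in the two-sided uniformity of the complete group $\breve Z$, the sets $D_k$ cover $X$; Baireness then yields $k_0$ with a nonempty open $O\subseteq D_{k_0}$. Translating a point $x_0\in O$ to the identity gives a neighbourhood of $e$ on which, for $n\ge k_0$, the elements $f_n(u)$ differ from $f_{k_0}(u)$ only by a factor lying in the conjugate $f_{k_0}(x_0)^{-1}V^2f_{k_0}(x_0)$; since $f_{k_0}$ is continuous this transfers, after letting $n\to\infty$, into a bound on $f(u)$.

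The obstacle is exactly this conjugation by the fixed element $g=f_{k_0}(x_0)$: in a non-commutative $\breve Z$ there is no reason for $g^{-1}V^2g$ to be contained in $W$, and $g$ is produced only after $V$ has been chosen. This is where I would invoke the $\w$-narrowness of $X$, covering it by countably many translates of a neighbourhood of the identity so as to arrange the Baire-generic set (and hence $g$) arbitrarily close to the identity, making the conjugation negligible; equivalently one reduces, through the representation of $\w$-narrow groups as subgroups of products of metrizable groups, to a metrizable target on which the tail $\{f_n:n\ge k_0\}$ is equicontinuous. In the commutative case relevant to modules this conjugation term is absent and Baireness alone finishes the proof. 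Finally, for the jointly continuous case I would first note $Y\subseteq\widetilde Y$, since for $y\in Y$ the slice $\breve h_y$ equals $h(\cdot,y):X\to Z\subseteq\breve Z$, which is continuous; then I would prove joint continuity of $\breve h$ at $(e_X,e_{\breve Y})$ by choosing, from joint continuity of $h$, neighbourhoods $U\ni e_X$ and $N\ni e_Y$ with $h(U\times N)\subseteq W'$ for a small $W'$ with $\overline{W'}\subseteq W$, and passing to the $\breve Y$-closure via right-continuity. For arbitrary $\tilde y_0\in\breve Y$ I would pick $y\in Y$ with $y^{-1}\tilde y_0$ in the relevant neighbourhood and write $\breve h(u,\tilde y_0)=\breve h(u,y)\,\breve h(u,y^{-1}\tilde y_0)$, where the first factor is continuous in $u$ because $y\in Y$ and the second is controlled by the joint continuity at $(e_X,e_{\breve Y})$; this proves $\breve h_{\tilde y_0}$ continuous, so $\widetilde Y=\breve Y$, and with both slice-continuities available the standard bi-homomorphism decomposition then upgrades $\breve h$ to a jointly continuous map everywhere.
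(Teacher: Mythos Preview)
Your construction of $\breve h$, the verification that it is a bi-homomorphism, the subgroup claim, and the handling of the jointly continuous case are all essentially the paper's arguments (the paper closes the last step by citing that a separately continuous bi-homomorphism continuous at the identity is continuous everywhere, but your decomposition $\breve h(u,\tilde y_0)=\breve h(u,y)\,\breve h(u,y^{-1}\tilde y_0)$ is an acceptable direct substitute).

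The gap is in the sequential-closedness step. Your Osgood/Banach--Steinhaus scheme with the Cauchy sets $D_k$ does produce a somewhere-dense $D_{k_0}$, but after translating the open set $O\ni x_0$ to the identity you get, for $u=x_0^{-1}v$ with $v\in O$,
\[
f_n(u)^{-1}f_m(u)=f_n(v)^{-1}\bigl(f_n(x_0)f_m(x_0)^{-1}\bigr)f_m(v),
\]
which is a conjugate of an element of $V$ by the uncontrolled elements $f_n(v),f_m(v)$---not merely by the single element $g=f_{k_0}(x_0)$. Neither of your proposed fixes resolves this: arranging $x_0$ close to $e_X$ via $\omega$-narrowness of $X$ does nothing about $f_n(v)$ for $v$ near $x_0$, and Guran's embedding of $X$ into a product of metrizable groups concerns the domain, not the target $\breve Z$ where the conjugation occurs. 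The point of the hypothesis is the $\omega$-narrowness of the \emph{image}, not of $X$ itself.

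The paper's argument uses $\omega$-narrowness of $X$ only to conclude that each $\breve h_{y_n}(X)$ is $\omega$-narrow; the subgroup generated by $\bigcup_n\breve h_{y_n}(X)$, and hence its closure, is then $\omega$-narrow, and since $\breve h_y(x)=\lim_n\breve h_{y_n}(x)$ lies in this closure, so is $\breve h_y(X)$. Now fix a functionally open $W$ with $WW^{-1}\subseteq U$. The set $A=\breve h_y^{-1}(W)$ is $F_\sigma$ in $X$ (write it via tails $\{x:\xi(\breve h_{y_m}(x))\ge 1/n\}$ for a continuous $\xi$ witnessing functional openness). By $\omega$-narrowness of the image there is a countable $C\subseteq X$ with $X=C\cdot A$; since $X$ is Baire, $A$ is non-meager, and Pettis's theorem makes $AA^{-1}$ a neighborhood of $e_X$ with $\breve h_y(AA^{-1})\subseteq WW^{-1}\subseteq U$. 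This bypasses the conjugation issue entirely and is the missing idea in your proposal.
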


\begin{proof} For every $x\in X$ consider the continuous homomorphism ${}^x\!h:Y\to Z$ and its continuous extension ${}^x\!\breve h:\breve Y\to\breve Z$ to the Ra\u\i kov completions of the groups $Y,Z$.
The homomorphisms ${}^x\!\breve h$, $x\in X$, compose a function $\breve h:X\times \breve Y\to\breve Z$ defined by $\breve h(x,y)={}^x\breve h(y)$ for $(x,y)\in X\times \breve Y$.

Let us check that $\breve h$ is indeed is bi-homomorphism. For any $x\in X$, the map ${}^x\!\breve
h:\breve Y\to\breve Z$ is a homomorphism, being a continuous extension of the homomorphism
${}^x\!h$. Next, we prove that for any point $y\in \breve Y$ the map $\breve h_y:X\to\breve Z$,
$\breve h_y:x\mapsto \breve h(x,y)$, is a homomorphism. Assuming the opposite, we can find two
points $a,b\in X$ such that $\breve h(ab,y)\ne \breve h(a,y)\cdot \breve h(b,y)$. Since the
topological group $\breve Z$ is Hausdorff, we can find disjoint open sets $W,W_+\subseteq\breve Z$
such that $\breve h(ab,y)\in W$ and $\breve h(a,y)\cdot\breve h(b,y)\in W_+$. By the continuity of the multiplication in $\breve Z$, there are open sets
$W_a,W_b\subseteq\breve Z$ such that $\breve h(a,y)\in W_a$, $\breve h(b,y)\in W_b$ and
$W_aW_b\subseteq W_+$. The continuity of the homomorphisms ${}^{ab}\breve h$, ${}^{a}\breve h$,
${}^{b}\breve h$ at $y$ provides a neighborhood $V\subseteq \breve Y$ of $y$ such that
${}^{ab}\breve h(V)\subseteq W$, ${}^{a}\breve h(V)\subseteq W_a$ and ${}^{b}\breve h(V)\subseteq
W_b$. Since $Y$ is dense in $\breve Y$, the neighborhood $V$ contains some point $v\in V\cap Y$.
Then the points $h(ab,v)={}^{ab}\breve h(v)\in W$ and $h(a,v)\cdot h(b,v)={}^a\breve
h(v)\cdot{}^{b}\breve h(v)\in W_aW_b\subseteq W_+$ are distinct, which is not possible as $h_v$ is
a homomorphism.

To see that the set $\widetilde Y$ is a subgroup of $\breve Y$, observe that for any points
$y,y'\in \widetilde Y$ the map $\breve h_{yy'}:X\to \breve Z$ is continuous being the product
$\breve h_y\cdot\breve h_{y'}$ of two continuous maps. Also the map $\breve
h_{y^{-1}}=\mathrm{inv}\circ \breve h_y$ is continuous because of the continuity of   the
inversion $\mathrm{inv}:\breve Z\to\breve Z$, $\mathrm{inv}:z\mapsto z^{-1}$.
\smallskip

Now assuming that $X$ is Baire and $\w$-narrow, we shall prove that the subgroup $\widetilde Y$ in
sequentially closed in $\breve Y$. Fix any sequence $\{y_n\}_{n\in\w}\subseteq\widetilde Y$,
convergent to a point $y\in\breve Y$. For every $n\in\w$, the continuity of the homomorphism
$h_{y_n}:X\to \breve Z$ implies that the image $h_{y_n}(X)$ is an $\w$-narrow subgroup of $\breve
Z$. By Corollary~\ref{cor:wnarrowunion}, the subgroup $H$ of $\breve Z$ generated by the union $\bigcup_{n\in\w}h_{y_n}(X)$ is
$\w$-narrow and so is its closure $\overline H$ in $\breve Z$. Taking into account that
$h_y(x)=\lim_{n\to\infty}h_{y_n}(x)$ for all $x\in X$, we conclude that
$h_y(X)\subseteq\overline{H}$ and hence the subgroup $h_y(X)$ is $\w$-narrow. Now we are ready to
prove that the homomorphism $h_y$ is continuous. Given any neighborhood $U\subseteq \breve Z$ of
the identity, we should find a neighborhood $V$ of the identity in $X$ such that $h_y(V)\subseteq
U$. Choose a neighborhood $W$ of the identity in $\breve Z$ such that $WW^{-1}\subseteq U$. Since the
topological space of $\breve Z$ is Tychonoff, we can additionally assume that the set $W$ is
functionally open in $\breve Z$, which means that $W=\xi^{-1}\big((0,1]\big)$ for some continuous
map $\xi:\breve Z\to[0,1]$. We claim that the set $h_y^{-1}(W)$  is Borel in $X$. This follows from
the representation
$$h_y^{-1}(W)=\bigcup_{n\in\IN}\bigcap_{m=n}^\infty\{x\in X:\xi(h(x,y_m))\ge
\tfrac1n\},$$witnessing that the set $A=h_y^{-1}(W)$ is of type $F_\sigma$ in $X$. By the
$\w$-narrowness of $h_y(X)$, there exists a countable set $C\subseteq X$ such that $h_y(X)\subseteq
h_y(C)\cdot W$. Then $X=C\cdot h_y^{-1}(W)=C\cdot A$. Since $X$ is Baire, the set $A$ is non-meager
in $X$. By Pettis Theorem \cite[9.9]{Ke}, the set $V=AA^{-1}$ is a neighborhood of the identity in
$X$. Then $$h_y(V)=h_y(A\cdot A^{-1})\subseteq h_y(A)\cdot h_y(A)^{-1}\subseteq  WW^{-1}\subseteq U,$$
witnessing that the homomorphism $h_y$ is continuous.
\smallskip

Finally, assuming the continuity of the bi-homomorphism $h$, we shall prove that the
bi-ho\-mo\-mor\-phism $\breve h$ is continuous. The separate continuity of $h$ implies that
$Y\subseteq \widetilde Y$, so $\widetilde Y$ is dense in $\breve Y$. First we prove that $\breve h$
is continuous at the identity. Given any neighborhood $W\subseteq\breve Z$ of the identity, find
a neighborhood $W_0\subseteq Z$ of the identity such that $\overline{W}_0\subseteq W$. Using the continuity
of the homomorphism $h:X\times Y\to Z$,  find a neighborhood $U$ of the identity in $X$ and a neighborhood
$V$ of the identity in $\breve Y$ such that $h(V\times (U\cap Y))\subseteq W_0$. For every $v\in V$ the
continuity of the homomorphism ${}^v\breve h$ implies that ${}^v\breve h(U)\subseteq{}^v\breve
h(\overline{U\cap Y})\subseteq\overline{W}_0\subseteq W$. Consequently, $\breve h(V\times
U)\subseteq W$ and the bi-homomorphism $\breve h:X\times \breve Y\to\breve Z$ is continuous at the identity and hence continuous everywherem see \cite[3.1]{EV}.
\end{proof}

Now we present two theorems that will help us to establish the joint continuity of some bi-homomorphisms. The first of them was proved by Cao and Moors in \cite{CM}.

\begin{theorem}[Cao, Moors]\label{p2.9} Let $T$ be a topological space, $X,Y$ be topological groups and $h:T\times X\to Y$ be a separately continuous map such that for every $t\in T$ the function ${}^th:X\to Y$, ${}^th:x\mapsto h(t,x)$, is a homomorphism.
If the space $X$ is strongly Baire, then for every first-countably-pracompact subspace $K\subseteq T$ the restriction $h{\restriction}_{K\times X}$ is jointly continuous.
\end{theorem}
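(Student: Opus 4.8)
The plan is to deduce joint continuity on $K\times X$ from continuity at the single-layer points $(t_0,e_X)$, $t_0\in K$, where $e_X$ denotes the identity of $X$. Since ${}^th$ is a homomorphism, $h(t,x)=h(t,x_0)\cdot h(t,x_0^{-1}x)$ for all $t,x$, and $t\mapsto h(t,x_0)$ is continuous by separate continuity; as $(t,x)\to(t_0,x_0)$ we have $x_0^{-1}x\to e_X$, so continuity of $h{\restriction}_{K\times X}$ at $(t_0,e_X)$ together with the continuity of the multiplication in $Y$ yields continuity at $(t_0,x_0)$. Note also $h(t_0,e_X)=e_Y$. Hence it suffices to fix $t_0\in K$ and prove that $h{\restriction}_{K\times X}$ is continuous at $(t_0,e_X)$.

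Assume it is not, and fix an open neighborhood $W\ni e_Y$ witnessing the failure: for every neighborhood $O\ni t_0$ in $K$ and every neighborhood $U\ni e_X$ in $X$ there are $t\in O$, $x\in U$ with $h(t,x)\notin W$. Choose symmetric open neighborhoods $W_1,W_5,W_6$ of $e_Y$ with $\overline{W_1}\subseteq W$ and $W_5W_6\subseteq W_1$. Since $K$ is first-countably-pracompact, fix a dense set $D\subseteq K$ and a decreasing sequence $(O_n)_{n\in\w}$ of neighborhoods of $t_0$ in $K$ such that every sequence in $\prod_{n\in\w}(O_n\cap D)$ clusters in $K$. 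Since $X$ is strongly Baire, fix a dense $D_X\subseteq X$ for which player I has no winning strategy in the Cao--Moors game on $X$. The bridge between the failure (phrased at $e_X$) and arbitrary open sets is the elementary fact that for every nonempty open $V\subseteq X$ the set $V^{-1}V$ is a neighborhood of $e_X$.

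Now I would define a strategy for player I. At the $n$th inning, facing player II's set $V_{n-1}\subseteq X$, apply the failure with $O=O_n$ and $U=V_{n-1}^{-1}V_{n-1}$ to obtain $t\in O_n$ and $a,b\in V_{n-1}$ with $h(t,a)^{-1}h(t,b)=h(t,a^{-1}b)\notin W$; using continuity of $h(\cdot,a^{-1}b)$ and density of $D$, perturb $t$ to some $t_n\in O_n\cap D$ keeping $h(t_n,a)^{-1}h(t_n,b)\notin\overline{W_1}$, and using continuity of ${}^{t_n}h$ and density of $D_X$ choose $a_n,b_n\in V_{n-1}\cap D_X$ near $a,b$ with the same separation. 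Then player I shrinks to a nonempty open $U_n\subseteq V_{n-1}$ around $a_n$ on which ${}^{t_n}h(U_n)\subseteq h(t_n,a_n)W_5$. Because $X$ is strongly Baire this strategy is not winning, so there is a play in which player II wins: there is a point $x^*\in\bigcap_{n}U_n$ and every sequence chosen from $\prod_n(V_n\cap D_X)$ clusters in $X$; in particular $(b_n)$ clusters at some $b^*\in X$, while $(t_n)\in\prod_n(O_n\cap D)$ clusters at some $t^*\in K$.

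It remains to combine these limits into a contradiction. Since $x^*\in U_n$ and ${}^{t_n}h(U_n)\subseteq h(t_n,a_n)W_5$, we get $h(t_n,x^*)^{-1}h(t_n,a_n)\in W_5$, whence $h(t_n,x^*)^{-1}h(t_n,b_n)\notin W_6$ for all $n$ (using the $\overline{W_1}$-separation and $W_5W_6\subseteq W_1$). The factor $t\mapsto h(t,x^*)$ is continuous, so $h(t_n,x^*)\to h(t^*,x^*)$ along a suitable subnet, and I would try to feed the clustering of $(b_n)$ at $b^*$, together with the separate continuity of $h$ in each variable, to force $h(t_n,b_n)$ arbitrarily close to the value dictated by $t^*$, contradicting $h(t_n,x^*)^{-1}h(t_n,b_n)\notin W_6$. \textbf{This last step is the main obstacle}: one must derive the contradiction using \emph{only} separate continuity (never the joint continuity being proved, nor any equicontinuity of the family $\{{}^{t}h\}$), and it is precisely here that both clauses of the Cao--Moors winning condition --- the nonempty intersection giving $x^*$ and the universal clustering of $D_X$-sequences --- and the first-countable-pracompactness of $K$ giving the cluster point $t^*$ must be orchestrated together. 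The bookkeeping that makes the inductively recorded separations survive the passage to these double cluster points is the delicate heart of the argument.
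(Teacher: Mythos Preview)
The paper does not prove this theorem at all: it is stated as a result of Cao and Moors and attributed to \cite{CM}, with no argument given in the present paper. Consequently there is no ``paper's own proof'' to compare your proposal against.

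As for your outline itself: the reduction to continuity at $(t_0,e_X)$ via the homomorphism identity is correct and standard, and the idea of encoding the discontinuity into a strategy for player~I in the Cao--Moors game on $X$, while simultaneously threading points $t_n\in O_n\cap D$ through the first-countably-pracompact witness at $t_0$, is the right architecture. However, you have yourself correctly flagged the real gap. In your final paragraph you need $h(t_n,b_n)$ to approach a limit compatible with $h(t^*,x^*)$, but with only separate continuity available, knowing $t_n\to t^*$ and $b_n\to b^*$ along subnets gives no control over $h(t_n,b_n)$; appealing to joint behavior here would be circular. The actual Cao--Moors argument avoids this trap by organizing the bookkeeping so that the contradiction is extracted from the \emph{single} continuous homomorphism ${}^{t^*}\!h$ together with continuity of the one-variable map $t\mapsto h(t,x^*)$: one arranges the inductive construction so that the recorded separation $h(t_n,a_n)^{-1}h(t_n,b_n)\notin\overline{W_1}$ survives when $t_n$ is replaced by the cluster point $t^*$ (using continuity in $t$ at the fixed pair $a_n,b_n$, which your construction does not yet secure), and then the clustering of $(a_n)$ and $(b_n)$ at points of $\bigcap_n\overline{V_n}$ collides with the continuity of ${}^{t^*}\!h$ at those cluster points. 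Your sketch is close in spirit but, as written, the passage to the limit in the final step is not justified; until that step is reorganized so that only one variable moves at a time, the argument is incomplete.
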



\begin{theorem}\label{t:2.6} Let $T$ be a topological space, $X,Y$ be topological groups and $h:T\times X\to Y$ be a separately continuous function such that for every $t\in T$ the function ${}^th:X\to Y$, ${}^th:x\mapsto h(t,x)$, is a homomorphism.
If the space $X$ is Baire, then for every co-Namioka compact subspace $K\subseteq T$ the restriction $h{\restriction}_{K\times X}$ is jointly continuous.
\end{theorem}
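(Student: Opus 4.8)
The plan is to mimic the structure of Theorem~\ref{p2.9}, replacing the game-theoretic input of strong Baireness by the co-Namioka property of $K$, and to reduce the $Y$-valued problem to the $\IR$-valued separately continuous functions for which co-Namioka is stated. First I would reduce the joint continuity of $h{\restriction}_{K\times X}$ to its continuity at the points $(t_0,e_X)$ with $t_0\in K$, where $e_X,e_Y$ denote the identities of $X,Y$. Indeed, for fixed $x_0\in X$ the homomorphism property gives $h(t,x)=h(t,x_0)\cdot h(t,x_0^{-1}x)$; the map $(t,x)\mapsto h(t,x_0)$ is continuous (depending continuously on $t$ alone, by separate continuity), the map $(t,x)\mapsto(t,x_0^{-1}x)$ is a homeomorphism carrying $(t_0,x_0)$ to $(t_0,e_X)$, and the multiplication of $Y$ is continuous, so continuity of $h$ at $(t_0,e_X)$ yields continuity at $(t_0,x_0)$.

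Next, I would fix $t_0\in K$ and a neighborhood $W$ of $e_Y$, and by the Birkhoff--Kakutani construction choose a continuous left-invariant pseudometric $d$ on $Y$ whose unit ball $\{y\in Y:d(e_Y,y)<1\}$ is contained in $W$. Consider $\phi:K\times X\to\IR$, $\phi(t,x)=d(e_Y,h(t,x))$. Since $h$ is separately continuous and $d$ is continuous, $\phi$ is separately continuous; as $K$ is co-Namioka and $X$ is Baire, there is a dense $G_\delta$-set $G\subseteq X$ such that $\phi$ is continuous at every point of $K\times G$.

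The crucial step is to transfer this continuity to the point $(t_0,e_X)$ via the homomorphism property, and here I expect the main obstacle: the real-valued $\phi$ only records the distance of $h(t,x)$ from $e_Y$, which is too weak to bound the $d$-distance between two arbitrary image points. I would overcome this by choosing the good point $x_*$ close to $e_X$. By continuity of ${}^{t_0}h$ at $e_X$ there is a neighborhood $N$ of $e_X$ with $d(e_Y,h(t_0,x))<\e$ for $x\in N$; by density pick $x_*\in G\cap N$, so that $\phi(t_0,x_*)<\e$. Joint continuity of $\phi$ at $(t_0,x_*)$ then supplies open sets $O\ni t_0$ in $K$ and $U\ni x_*$ in $X$ with $\phi<2\e$ on $O\times U$. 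Put $V=x_*^{-1}U$, a neighborhood of $e_X$. For $(t,v)\in O\times V$ we have $x_*v\in U$, and the homomorphism identity $h(t,v)=h(t,x_*)^{-1}h(t,x_*v)$ together with the left-invariance of $d$ gives
$$d(e_Y,h(t,v))=d\big(h(t,x_*),h(t,x_*v)\big)\le \phi(t,x_*)+\phi(t,x_*v)<4\e.$$
Choosing $\e<\tfrac14$ forces $h(t,v)\in\{y\in Y:d(e_Y,y)<1\}\subseteq W$, so $h(O\times V)\subseteq W$ and $h$ is continuous at $(t_0,e_X)$. Combined with the reduction of the first paragraph, this establishes the joint continuity of $h{\restriction}_{K\times X}$.
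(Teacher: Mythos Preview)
Your proof is correct and shares the same core strategy as the paper: pass to a left-invariant pseudometric on $Y$, apply the co-Namioka property of $K$ (with the Baireness of $X$) to obtain a dense $G_\delta$-set $G\subseteq X$ of joint continuity points, and then exploit the homomorphism property of each ${}^t h$ to translate continuity from a point of $G$ to the desired point. The differences are organizational. The paper argues by contradiction at an arbitrary discontinuity point $(z,x)$, picks any $g\in G$, and uses the three-factor identity $h(u,xg^{-1}v)=h(u,x)h(u,g)^{-1}h(u,v)$ together with the cushion $WW^{-1}W\subseteq U$; you instead first reduce to continuity at $(t_0,e_X)$ and then choose $x_*\in G$ already close to $e_X$, so that only the two-factor identity $h(t,v)=h(t,x_*)^{-1}h(t,x_*v)$ and a triangle inequality are needed. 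The paper also invokes \cite[3.1]{Bouziad} to upgrade co-Namioka to joint continuity of the map $h_\rho$ into the pseudometric space $(Y,\rho)$, while you apply the co-Namioka hypothesis directly to the real-valued function $\phi(t,x)=d(e_Y,h(t,x))$; your version is therefore a bit more self-contained, at the cost of letting the auxiliary set $G$ depend on the chosen neighborhood $W$.
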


\begin{proof} Assume that the space $X$ is Baire and $K$ is a compact co-Namioka subspace of $T$. We claim that the map $h{\restriction}_{K\times X}$ is continuous. To derive a contradiction, assume that $h{\restriction}_{K\times X}$ has a discontinuity point $(z,x)\in K\times X$. Then there exists a neighborhood
$U$ of the identity $e$ in the topological group $Y$ such that $h(V)\not\subseteq h(z,x)\cdot U$ for any
neighborhood $V$ of $(z,x)$ in $K\times X$. Find a neighborhood of $W$ of $e$ in $Y$ such that
$WW^{-1}W\subseteq U$.

By \cite[3.3.9]{AT}, there exists a left-invariant continuous pseudometric $\rho:Y\times Y\to \IR$ such that $B_1\subseteq W$ where $B_1=\{y\in Y:\rho(e,y)<1\}$. Let $Y_\rho$ be the space $Y$ endowed with the pseudometric $\rho$ and $h_\rho$ be the map $h$ considered as a map with values in the pseudometric space $Y_\rho$.

Since the compact space $K$ is  co-Namioka, we can apply \cite[3.1]{Bouziad} and find a dense $G_\delta$-set
$G\subseteq X$ such that the map $h_\rho{\restriction}_{K\times X}$ is continuous at each point of
the set $K\times G$. Fix any point $g\in G$ and by the continuity of the map
$h_\rho{\restriction}_{K\times X}$ at $(z,g)$, find open sets $U_z\subseteq K$ and $V_g\subseteq X$
such that $(z,g)\in U_z\times V_g$ and $h_\rho(U_z\times V_g)\subseteq  h(z,g)\cdot B_1\subseteq
h(z,g)\cdot W$. Replacing $U_z$ by a smaller neighborhood, if needed, we can additionally assume
that $h(U_z\times\{x\})\subseteq h(z,x)\cdot W$ and $h(U_z\times\{g\})\subseteq h(z,g)\cdot W$.

 Observe that the set $xg^{-1}V_g$ is a neighborhood of the point $x$ in the topological group $X$.
 By the choice of the set $U$, there is a point $(u,v)\in U_z\times V_g$ such that $h(u,xg^{-1}v)\notin h(z,x){\cdot} U$. On the other hand,
 \begin{multline*}h(u,xg^{-1}v)=h(u,x)h(u,g)^{-1}h(u,v)\subseteq h(z,x)\cdot WW^{-1}h(z,g)^{-1}\cdot h(z,g)\cdot W=\\
 =h(z,x)WW^{-1}W\subseteq h(z,x){\cdot}U,
\end{multline*}
which is a required contradiction.
\end{proof}

\subsection{Sep-joint automatic pairs of topological groups}

\begin{definition} A pair $(X,Y)$ of topological groups is called {\em sep-joint automatic} if every separately continuous bi-homomorphism $h:X\times Y\to Z$ into a topological group $Z$ is jointly continuous.
\end{definition}

\begin{theorem}\label{t2.8} A pair $(X,Y)$ of topological groups is sep-joint automatic if one of the following conditions holds:
\begin{enumerate}
\item the pair $(Y,X)$ is sep-joint authomatic;
\item $X$ is strongly Baire and $Y$ is first-countably-pracompact;
\item $X$ is locally compact and $Y$ is Baire;
\item $X$ is locally compact and $Y$ is a $k$-space.
\end{enumerate}
\end{theorem}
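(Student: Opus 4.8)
The plan is to establish each of the four sufficient conditions, using the earlier machinery as much as possible. For condition (1), the key observation is that sep-joint automaticity should be symmetric in the two factors. Given a separately continuous bi-homomorphism $h:X\times Y\to Z$, I would pass to the ``transposed'' map $\tilde h:Y\times X\to Z$ defined by $\tilde h(y,x)=h(x,y)$; this is again a separately continuous bi-homomorphism, so if $(Y,X)$ is sep-joint automatic then $\tilde h$ is jointly continuous, and since joint continuity is clearly invariant under swapping coordinates, $h$ is jointly continuous as well. This is essentially immediate.

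For conditions (2)--(4), the strategy is to first upgrade separate continuity to continuity along one variable uniformly, and then apply one of the joint-continuity theorems stated above. In each case a separately continuous bi-homomorphism $h:X\times Y\to Z$ satisfies the hypotheses of Theorems~\ref{p2.9} and~\ref{t:2.6} with $T=Y$ (or $T=X$), since for every fixed first argument the map is a continuous homomorphism in the second, and vice versa. For condition (2), I would apply Theorem~\ref{p2.9} with the roles chosen so that $X$ (strongly Baire) plays the Baire-type factor and $Y$ (first-countably-pracompact) is covered by first-countably-pracompact subspaces; since $Y$ itself is first-countably-pracompact, taking $K=Y$ yields joint continuity of $h$ directly. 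The main technical point is checking that ``jointly continuous on $K\times X$ for $K=Y$'' is exactly what we want and that the homomorphism hypotheses of Theorem~\ref{p2.9} are met by a bi-homomorphism.

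For conditions (3) and (4), the idea is to reduce to Theorem~\ref{t:2.6} via co-Namioka compactness. A locally compact group $X$ is covered by compact neighborhoods, and each compact subset of a locally compact (hence, being Tychonoff, dyadic-friendly) group is co-Namioka; more directly, a compact subset $K$ of a locally compact group is metrizable-or-dyadic enough that Lemma~\ref{l:dcN} and the surrounding results give that $K$ is co-Namioka. With $Y$ Baire in case (3), I would apply Theorem~\ref{t:2.6} (with $T=X$, the factor $Y$ playing the Baire group, and $K$ a compact co-Namioka neighborhood in $X$) to conclude that $h$ is continuous on $K\times Y$ for each such $K$; since these $K$ cover $X$ and joint continuity is a local property, $h$ is jointly continuous. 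Case (4) is then obtained from case (3) by showing that a $k$-space which is a topological group is Baire, or alternatively by using the $k$-space structure of $Y$ together with local compactness of $X$ to reduce continuity to continuity on products with compact sets.

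The hard part will be verifying the co-Namioka property of compact subsets of a locally compact group needed for (3) and (4): one must argue that every compact subset of a locally compact topological group is co-Namioka, presumably by embedding it in a dyadic compact group or invoking a structure theorem, and then wire this correctly into Theorem~\ref{t:2.6}. A secondary subtlety is the bookkeeping of which factor plays which role in Theorems~\ref{p2.9} and~\ref{t:2.6}, since those theorems are asymmetric (one factor is the Baire-type group $X$, the other supplies the parameter space $T$ and its compact subspaces $K$); I would take care that the strong-Baire or Baire hypothesis is always attached to the factor required to be Baire in the cited theorem, and that the compact (or pracompact) subspaces exhaust the other factor so that taking their union recovers joint continuity on the whole product.
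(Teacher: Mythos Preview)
Your treatment of cases (1)--(3) matches the paper's approach. For (3), the precise mechanism the paper uses is that a locally compact topological group is \emph{locally dyadic compact} (Theorem~3.1.15 in \cite{AT}); thus one chooses, for each point $x\in X$, a dyadic compact neighborhood $K$, applies Lemma~\ref{l:dcN} to see that $K$ is co-Namioka, and then invokes Theorem~\ref{t:2.6}. You do not need the stronger claim that \emph{every} compact subset of $X$ is co-Namioka; the locally dyadic structure hands you the right neighborhoods directly.

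Your proposal for case (4), however, has a genuine gap. Alternative (a)---deducing (4) from (3) by arguing that a $k$-space topological group is Baire---is simply false: the rationals $\IQ$ with their usual topology form a metrizable (hence $k$-space) topological group which is not Baire. Alternative (b) is on the right track but misses the essential ingredient: to reduce joint continuity on $X\times Y$ to continuity on products with compact pieces, you need $X\times Y$ itself to be a $k$-space, which is not automatic from $Y$ being a $k$-space. The paper invokes the classical result \cite[3.3.27]{En} that the product of a locally compact space with a $k$-space is a $k$-space. With that in hand, it suffices to check continuity of $h$ on each compact $K\subseteq X\times Y$; one then projects $K$ into a compact $K_Y\subseteq Y$ and applies Theorem~\ref{p2.9} (not Theorem~\ref{t:2.6}) with $X$ strongly Baire (since locally compact implies \v Cech-complete implies strongly Baire) and $K_Y$ first-countably-pracompact (since compact implies countably compact implies first-countably-compact). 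So the paper's route for (4) goes through Cao--Moors rather than through the co-Namioka machinery, and the product-$k$-space lemma is the step your sketch does not supply.
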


\begin{proof} 1, 2. The first statement is trivial and the second one follows from Theorem~\ref{p2.9}.
\smallskip

3. To prove the third statement, assume that the topological group $X$ is locally compact and $Y$ is Baire. Take any separately continuous bi-homomorphism $h:X\times Y\to Z$ to a topological group $Z$. To prove that $h$ is continuous, take any pair $(x,y)\in X\times Y$. By Theorem 3.1.15 in \cite{AT}, the locally compact topological group $X$ is locally dyadic compact. Consequently, we can find a dyadic compact space $K\subseteq X$ containing $x$ in its interior in $X$. By Lemma~\ref{l:dcN}, the dyadic compact space $K$ is co-Namioka and by Theorem~\ref{t:2.6}, the restriction $h{\restriction}_{K\times Y}$ is jointly continuous. Since $K\times Y$ is a neighborhood of $(x,y)$, the bi-homomorphism $h$ is continuous at $(x,y)$.
\smallskip

4. Assume that $X$ is locally compact and $Y$ is a $k$-space. By  \cite[3.3.27]{En}, the product
$X\times Y$ is a $k$-space. To prove that the pair $(X,Y)$ is sep-joint automatic, take any
separately continuous bi-homomorphism $h:X\times Y\to Z$ to a topological group $Z$. Since $X\times
Y$ is a $k$-space, it suffices to prove that for every compact space $K\subseteq X\times Y$, the
restriction $h{\restriction}_K$ is continuous. Find a compact set $K_Y\subseteq Y$ such that
$K\subseteq X\times K_Y$. Taking into account that the locally compact space $X$ is strongly Baire
and the compact space $K_Y$ is first-countably-pracompact, we can apply Theorem~\ref{p2.9} and
conclude that the restriction $h{\restriction}_{X\times K_Y}$ is continuous and so is the
restriction $h{\restriction}_K$.
\end{proof}

\begin{remark} In \cite{MP}  Mykhaylyuk and Pol constructed a compact Hausdorff space $K$ whose function space $F=C_p(K,\{-1,1\})$ is Baire in the topology of pointwise convergence. For this compact space $K$, the evaluation map $e:K\times F\to \{-1,1\}$, $e:(x,f)\mapsto f(x)$, is separately continuous and everywhere discontinuous. This example shows that ``strong Baire'' in Cao--Moors Theorem~\ref{p2.9} cannot be weakened to ``Baire" and also that Theorem~\ref{t2.8} cannot be extended beyond the class of bi-homomorphisms.
\end{remark}

\section{Semitopological modules and their completions}\label{s3}

In this section we apply the results of the preceding section to describe the structure of
semitopological modules over topological rings and to study the continuity properties of the
multiplication map in semitopological modules.

By a {\em topological ring} we understand a unital ring $R$ endowed with a Hausdorff topology
$\tau$ making the addition $+:R\times R\to R$ and multiplication $\cdot:R\times R\to R$ continuous.
A ring $R$ will be called {\em unital} if it has a unit (i.e., an element $1\in R$ such that
$1x=x1=x$ for all $x\in X$). An element $a$ of a ring $R$ is called {\em invertible} if there exists an element $a^{-1}\in R$ such that $aa^{-1}=a^{-1}a$ is the unit of $R$. It follows that a ring is unital if and only if it has an invertible element. A commutative ring is a {\em field} if every non-zero element of the ring is invertible.

Theorem~\ref{p2.1} implies that the Ra\u\i kov completion $\breve R$ of (the  additive group) of a topological ring $R$ has
the structure of a topological ring. This topological ring will be called the {\em completion} of $R$.

A {\em module over a ring $R$} (briefly, an {\em $R$-module}) is an Abelian group $X$ endowed with
a map $*:R\times X\to X$, $*:(a,x)\mapsto a*x$, such that for every $a,b\in R$ and $x,y\in X$ the
following conditions are satisfied:
\begin{itemize}
\item $a*(x+y)=a*x+a*y$;
\item $(a+b)* x=a*x+b*x$;
\item $(ab)* x=a* (b* x)$;
\item $1* x=x$.
\end{itemize}
The first two conditions imply that the multiplication map $*:R\times X\to X$ is a bi-homomorphism.
So, for points $r\in R$ and $b\in X$ the functions ${}^r*:X\to X$, ${}^r*:x\mapsto r*x$, and
$*^b:R\to X$, $*^b:\rho\mapsto\rho*b$, are homomorphisms.

A {\em right-topological module over a topological ring $R$} (briefly, a {\em right-topological
$R$-module}) is a module $X$ over $R$ endowed with a Hausdorff topology $\tau$ turning $X$ into an
Abelian topological group and making the multiplication map $*:R\times X\to X$ right-continuous. If
the multiplication map $*$ is separately continuous, then $(X,\tau)$ is called a {\em
semitopological $R$-module}; if $*$ is jointly continuous, then $(X,\tau)$ is called a {\em
topological $R$-module}.

Right-topological $R$-modules are objects of the category whose morphisms are continuous $R$-linear maps between right-topological $R$-modules.
A map $h:X\to Y$ between two $R$-modules is called {\em $R$-linear} if $$h(x_1+x_2)=h(x_1)+h(x_2)\quad\mbox{and}\quad h(a*x_1)=a*h(x_2)$$ for any points $x_1,x_2\in X$ and $a\in R$.

It is clear that (semi)topological linear spaces over topological fields are partial cases of (semi)\-topological modules over topological rings.

Given a right-topological module $X$ over a topological ring $R$, consider the multiplication map
$*:R\times X\to X$, $*:(a,x)\mapsto a*x$, and observe that it is a right-continuous
bi-homomorphism. By Theorem~\ref{p2.1}, this bi-homomorphism can be uniquely extended to a
right-continuous bi-homomorphism $\breve *:R\times\breve X\to\breve X$, which turns $\breve X$ into
a right-topological $R$-module, which will be called the {\em Ra\u\i kov completion} of $X$.

For a right-topological $R$-module $X$ the subgroup $$\widetilde X=\{x\in\breve
X:\breve*_x:R\to \breve X\mbox{ is continuous}\}$$is an $R$-submodule of $\breve X$. Indeed, for any
$x\in\widetilde X$ and $a\in R$ the continuity of the map $*_{a*x}:R\to \breve X$ follows from the
equality $*_{a*x}(r)=r*(a*x)=(ra)*x=*_x(ra)$, $r\in R$, and the continuity of the multiplication in
the topological ring $R$. By definition, $\widetilde X$ is the maximal semitopological
$R$-submodule of $\breve X$. If $X$ is a semitopological $R$-module, then the submodule $\widetilde
X$ contains $X$ and is called the {\em semitopological $R$-completion} of $X$.

The construction of Ra\u\i kov completion determines a functor in the category of right-topological
$R$-modules.

Indeed, any continuous $R$-linear map $h:X\to Y$ between semitopological $R$-modules can be
extended to a continuous homomorphism $\breve h:\breve X\to\breve Y$. For every $a\in R$, the
continuity of multiplication by $a$ in $X$ and $Y$, and the continuity of the
map $\breve h$ imply that the set $\{x\in\breve X:\breve h(a*x)=a* \breve h(x)\}\supseteq X$ is
closed in $\breve X$ and being dense coincides with $\breve X$. This means that the homomorphism
$\breve h$ is $R$-linear.

The construction of semitopological $R$-completion also is functorial in the
category of semitopological $R$-modules. Indeed, for any continuous $R$-linear map $h:X\to Y$
between semitopological $R$-modules, consider its continuous extension $\breve h:\breve X\to\breve
Y$ to the Ra\u\i kov completions of $X$ and $Y$. Observe that for any $x\in\widetilde X$ the
continuity of the homomorphism $*_x:R\to\widetilde X\subseteq\breve X$, $*_x:a\mapsto a*x$, implies
the continuity of the homomorphism $*_{\breve h(x)}=\breve h\circ*_x:R\to \breve Y$, $*_{\breve
h(x)}:a\mapsto a*\breve h(x)=\breve h(a*x)$. So $\breve h(x)\in \widetilde Y$ and $\breve
h(\widetilde X)\subseteq \widetilde Y$. Then the restriction $\tilde h=\breve h{\restriction}_{\widetilde
X}:\widetilde X\to\widetilde Y$ is a well-defined continuous $R$-linear map between the
semitopological $R$-modules $\widetilde X$ and $\widetilde Y$.

\begin{theorem}\label{t3.1} For any semitopological $R$-module $X$ over a topological ring $R$, its $R$-completion $\widetilde X$ is a semitopological $R$-module having the following properties.
\begin{enumerate}
\item The semitopological $R$-module $\widetilde X$ is closed in any semitopological $R$-module $Y$ containing $\widetilde X$;
\item If $X$ is a topological $R$-module, then $\widetilde X$ coincides with the Ra\u\i kov completion $\breve X$ of $X$ and is a topological $R$-module.
\item If the additive topological group of $R$ is Baire and $\w$-narrow, then the set $\widetilde X$ is sequentially closed in the Ra\u\i kov completion $\breve X$ of $X$.
\end{enumerate}
\end{theorem}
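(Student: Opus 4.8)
The plan is to treat parts (2) and (3) as immediate consequences of Theorem~\ref{p2.1} and to reserve the genuine work for part (1). First I would recall from the discussion preceding the statement that $\widetilde X$ is an $R$-submodule of $\breve X$ on which the restricted multiplication $\breve*$ is separately continuous (right-continuity holds by the very definition of $\widetilde X$, and left-continuity comes from the continuity of each homomorphism ${}^r\breve*$), so $\widetilde X$ is indeed a semitopological $R$-module. For part (2) I would apply Theorem~\ref{p2.1} to the bi-homomorphism $*\colon R\times X\to X$, letting $R$ play the role of the group ``$X$'' and $X$ the role of ``$Y$'' in that theorem, so that its extension is $\breve*\colon R\times\breve X\to\breve X$. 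When $X$ is a topological $R$-module the map $*$ is jointly continuous, and the final clause of Theorem~\ref{p2.1} yields $\widetilde X=\breve X$ together with the joint continuity of $\breve*$; this is exactly the assertion that $\widetilde X=\breve X$ is a topological $R$-module. For part (3) the same application of Theorem~\ref{p2.1} applies: when the additive group of $R$ is Baire and $\w$-narrow, the hypothesis on the first factor is met, and the theorem states precisely that $\widetilde X$ is sequentially closed in $\breve X$.

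The substantive point is part (1). Let $Y$ be a semitopological $R$-module containing $\widetilde X$ as a sub-$R$-module with the subspace topology, and let $\breve Y$ be its Ra\u\i kov completion, carrying the extended multiplication $\breve*_Y\colon R\times\breve Y\to\breve Y$ furnished by Theorem~\ref{p2.1}. The first step is to identify the closure of $\widetilde X$ inside $\breve Y$. Since $X$ is dense in $\breve X$ and $X\subseteq\widetilde X\subseteq\breve X$, the subgroup $\widetilde X$ is dense in the complete group $\breve X$, so $\breve X$ is the Ra\u\i kov completion of $\widetilde X$; hence the closure $\overline{\widetilde X}$ taken in $\breve Y$ is a copy of $\breve X$, and the subspace topology it inherits from $\breve Y$ coincides with the original topology of $\breve X$. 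By the uniqueness clause of Theorem~\ref{p2.1}, the restriction of $\breve*_Y$ to $R\times\breve X$ coincides with $\breve*$; in particular $\breve X$ is $R$-invariant inside $\breve Y$ and the two module structures agree.

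Next I would compute the closure of $\widetilde X$ in $Y$ as $\overline{\widetilde X}\cap Y=\breve X\cap Y$, using that $Y$ embeds in $\breve Y$ as a topological subspace, and take an arbitrary point $y\in\breve X\cap Y$. Because $Y$ is a semitopological $R$-module, the map $a\mapsto a*y$ is continuous from $R$ into $Y$; since $y\in\breve X$ and $\breve X$ is $R$-invariant, all values $a*y$ lie in $\breve X$, and the compatibility of the multiplications identifies this with the map $\breve*_y\colon R\to\breve X$. As this is a continuous map $R\to\breve Y$ whose values land in $\breve X$, on which the subspace topology is the original one, the map $\breve*_y\colon R\to\breve X$ is continuous, whence $y\in\widetilde X$. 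Therefore $\overline{\widetilde X}\cap Y\subseteq\widetilde X$, and since trivially $\widetilde X\subseteq\breve X\cap Y$, we conclude $\widetilde X$ is closed in $Y$.

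I expect the main obstacle to be the bookkeeping in part (1): verifying that the several $R$-module structures (on $\breve X$, on $Y$, and on $\breve Y$) are mutually compatible and that the relevant subspace topologies coincide. Both of these rest entirely on the uniqueness of the bi-homomorphism extension in Theorem~\ref{p2.1}, so the argument is conceptually short once that uniqueness is invoked carefully, but one must be vigilant that the element $a*y$ computed in $Y$ is literally the same as the one computed in $\breve X$.
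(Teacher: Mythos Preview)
Your proposal is correct and follows essentially the same approach as the paper. Parts (2) and (3) are indeed dispatched by a direct appeal to Theorem~\ref{p2.1}, and for part (1) the paper argues, exactly as you do, that $\breve X$ sits as a closed subgroup inside $\breve Y$ (citing \cite[3.6.18]{AT}), that the closure of $\widetilde X$ in $Y$ is therefore $\breve X\cap Y$, and that every $y\in\breve X\cap Y$ lies in $\widetilde X$ because the map $*_y$ is continuous into $Y\cap\breve X$; your additional bookkeeping about compatibility of the module structures via the uniqueness clause of Theorem~\ref{p2.1} is not spelled out in the paper but is precisely the care that makes the terse argument rigorous.
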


\begin{proof} 1. Let $Y$ be any semitopological $R$-module containing the semitopological $R$-module $\widetilde X$. Since $X\subseteq\widetilde X\subseteq Y$, the Ra\u\i kov completion $\breve Y$ of $Y$ contains the Ra\u\i kov completion $\breve X$ of $X$ and $\breve X$ is closed in $\breve Y$ (see \cite[3.6.18]{AT}). For every $y\in Y\cap\breve X$ the continuity of the map $*_y:R\to Y\cap \breve X$, $*_y:a\mapsto a*y$, implies that $y\in \widetilde X$. Now we see that the set $\widetilde X=\breve X\cap Y$ is closed in $Y$.
\smallskip

2. If $X$ is a topological $R$-module, then by Theorem~\ref{p2.1}, the continuity of the multiplication map $*:R\times X\to X$ implies the continuity of its extension $\breve h:R\times \breve X\to \breve X$, which means that $\breve X$ is a topological $R$-module and $\widetilde X=\breve X$.
\smallskip

3. If the additive topological group of $R$ is Baire and $\w$-narrow, then the set $\widetilde X$ is sequentially closed in $\breve X$ according to Theorem~\ref{p2.1}.
\end{proof}

Let $X$ be a right-topological $R$-module over a topological ring $R$. We say that a subset
$B\subseteq X$ is {\em $R$-bounded} if for every neighborhood $U\subseteq X$ of zero there is a
neighborhood $V\subseteq R$ of zero such that $V*B\subseteq U$. This is equivalent to saying that
the family of homomorphisms $\{*_x:R\to X\}_{x\in B}\subseteq \Hom_p(R,X)$ is equicontinuous. This
simple observation allows us to apply Theorem~\ref{t2.8} and obtain the following  theorems.

\begin{theorem}\label{t3.2} Let $X$ be a semitopological $R$-module over a topological ring $R$.
\begin{enumerate}
\item For any $R$-bounded set $B\subseteq X$ the multiplication map $R\times B\to X$, $(r,x)\mapsto r*x$, is continuous.
\item If the space $R$ is strongly Baire, then each compact subset of $X$ is $R$-bounded.
\end{enumerate}
\end{theorem}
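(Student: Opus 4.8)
The plan is to treat the two statements separately, using the defining equicontinuity property of $R$-bounded sets for \textup{(1)} and the Cao--Moors theorem for \textup{(2)}.

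For \textup{(1)}, I would fix a point $(r_0,x_0)\in R\times B$ and split the increment as
$$r*x-r_0*x_0=(r-r_0)*x+r_0*(x-x_0).$$
Since $*_x\colon R\to X$ is a homomorphism, the first summand equals $*_x(r-r_0)$, and the $R$-boundedness of $B$ supplies a neighborhood $V$ of zero in $R$ with $V*B\subseteq U$ for a preassigned neighborhood $U$ of zero in $X$; hence this term lies in $U$ as soon as $r-r_0\in V$, \emph{uniformly} over $x\in B$. The second summand is governed by separate continuity: the homomorphism ${}^{r_0}*\colon X\to X$ is continuous, so $r_0*(x-x_0)\in U$ for $x$ in a suitable neighborhood of $x_0$. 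Choosing $U$ so that $r_0*x_0+U+U$ sits inside a prescribed neighborhood of $r_0*x_0$ then yields joint continuity at $(r_0,x_0)$. This part is a routine two-term estimate once the definition of $R$-boundedness is invoked.

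For \textup{(2)}, the idea is to reduce boundedness of a compact set $K\subseteq X$ to joint continuity of multiplication on $K\times R$ and then extract a uniform neighborhood by compactness. The crucial move is to pass to the flipped multiplication $\tilde*\colon X\times R\to X$, $\tilde*(x,r)=r*x$, which is separately continuous and, for each fixed $x\in X$, a homomorphism in the $R$-variable. Since a compact space is countably compact, hence first-countably-compact and therefore first-countably-pracompact, the set $K$ qualifies as a first-countably-pracompact subspace of the first factor. With the additive group of $R$ playing the role of the strongly Baire \emph{second} factor, Theorem~\ref{p2.9} applies and gives joint continuity of $\tilde*{\restriction}_{K\times R}$.

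It remains to convert this joint continuity into $R$-boundedness. Given a neighborhood $U$ of zero in $X$, for each $x\in K$ joint continuity at $(x,0)$ (where $0*x=0$) furnishes a neighborhood $O_x$ of $x$ in $X$ and a neighborhood $V_x$ of zero in $R$ with $r*x'\in U$ whenever $x'\in O_x\cap K$ and $r\in V_x$. Covering $K$ by finitely many $O_{x_1},\dots,O_{x_n}$ and setting $V=\bigcap_{i=1}^n V_{x_i}$ gives $V*K\subseteq U$, which is exactly the required $R$-boundedness. I expect the main obstacle to be the bookkeeping of orientation in Theorem~\ref{p2.9}: the strongly Baire hypothesis sits on the factor that must appear second, forcing the flip to $\tilde*$, while the compact set must appear in the first factor. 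Verifying that compactness indeed delivers first-countable-pracompactness is the one point where the precise definitions recalled in Section~\ref{s2} must be checked carefully.
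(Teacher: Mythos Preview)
Your argument is correct and matches what the paper does: the paper itself gives no detailed proof, only the one-line remark that $R$-boundedness of $B$ is precisely equicontinuity of $\{*_x\}_{x\in B}$ and that ``this simple observation allows us to apply Theorem~\ref{t2.8}''. Your two-term decomposition for part~(1) is exactly the standard unpacking of that equicontinuity statement, and for part~(2) you correctly drill down to the Cao--Moors result (Theorem~\ref{p2.9}) underlying Theorem~\ref{t2.8}(2), apply it with the flipped map $\tilde*:X\times R\to X$ so that the strongly Baire factor $R$ sits in the correct slot, and then run the expected compactness-plus-finite-intersection argument to extract the uniform neighborhood $V$; the chain ``compact $\Rightarrow$ countably compact $\Rightarrow$ first-countably-compact $\Rightarrow$ first-countably-pracompact'' is exactly the verification the paper's definitions in Section~\ref{s2} support.
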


\begin{theorem}\label{t3.4} A semitopological $R$-module $X$ is a topological $R$-module if $(R,X)$ is a sep-joint automatic pair of topological groups. This happens if the topological ring $R$ is locally compact and the space $X$ is Baire or a $k$-space.
\end{theorem}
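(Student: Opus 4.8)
The plan is to reduce the statement entirely to the definition of a sep-joint automatic pair together with Theorem~\ref{t2.8}, since the genuine analytic content has already been packaged into those earlier results. First I would recall two facts that are in place by the time we reach this theorem: the module axioms guarantee that the multiplication map $*:R\times X\to X$ is a bi-homomorphism (this was noted right after the module axioms), and the defining property of a semitopological $R$-module is precisely that $*$ is separately continuous. Thus $*$ is a separately continuous bi-homomorphism from $R\times X$ into the additive topological group of $X$.

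For the first assertion, I would apply the definition of a sep-joint automatic pair to the separately continuous bi-homomorphism $h=*:R\times X\to X$, taking the target topological group $Z$ to be the underlying abelian topological group of $X$ itself. Sep-joint automaticity of $(R,X)$ then forces $*$ to be jointly continuous, which is exactly the assertion that $X$ is a topological $R$-module.

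For the second assertion, I would check that the stated hypotheses place us squarely in the setting of Theorem~\ref{t2.8}, with the locally compact group there taken to be $R$. If $R$ is locally compact and $X$ is Baire, item (3) of Theorem~\ref{t2.8} gives that $(R,X)$ is sep-joint automatic; if instead $X$ is a $k$-space, the same conclusion follows from item (4). In either case, combining with the first assertion yields that $X$ is a topological $R$-module.

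I expect no real obstacle here, precisely because the difficult steps live in Theorem~\ref{t2.8} (which in turn rests on the Cao--Moors Theorem~\ref{p2.9} and on Theorem~\ref{t:2.6}). The only point deserving a moment's care is the bookkeeping in the first assertion: one must observe that the definition of sep-joint automaticity quantifies over \emph{all} target topological groups $Z$, so it legitimately specializes to $Z=X$, where $X$ is regarded merely as an abelian topological group. Once that identification is made explicit, the remainder is a direct chain of implications.
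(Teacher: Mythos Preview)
Your proposal is correct and matches the paper's own treatment: the paper gives no explicit proof of Theorem~\ref{t3.4}, merely remarking that it follows from Theorem~\ref{t2.8} together with the observation that the multiplication map is a separately continuous bi-homomorphism. Your write-up is precisely the intended argument, spelled out in full.
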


\section{The Bohr modifications of semitopological modules}\label{s4}

In this section we shall answer Problem~\ref{prob} by constructing (many) examples of semitopological linear spaces which are not topological linear spaces. These examples are just usual linear topological spaces endowed with the Bohr topology.

The {\em Bohr topology} on a topological group $X$ is the largest totally bounded group topology on $X$. It can be equivalently defined as the smallest topology $\tau$ on $X$ such that any continuous homomorphism $h:X\to K$ to a Hausdorff compact topological group $K$ remains continuous with respect to the topology $\tau$. By $X^{\flat}$ we shall denote the group $X$ endowed with the Bohr topology. This topological group will be called the {\em Bohr modification} of $X$. 

A topological group $X$ is called {\em Bohr separated} if the Bohr topology on $X$ is Hausdorff. For example, each locally convex linear topological space is Bohr separated. On the other hand, the linear metric spaces $L_p$, $0<p<1$, are not Bohr separated, see \cite[4.2.3]{Rol}.

Observe that any continuous homomorphism $h:X\to Y$ between topological groups remains continuous as a homomorphism $h:X^{\flat}\to Y^{\flat}$ between the Bohr modifications of $X$ and $Y$. This observation implies the following simple fact.

\begin{proposition} For any Bohr separated semitopological module $X$ over a topological ring $R$ the Bohr modification $X^{\flat}$  is a semitopological $R$-module.
\end{proposition}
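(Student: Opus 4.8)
The plan is to verify the three defining requirements of a semitopological $R$-module for $X^{\flat}$: that it is a Hausdorff Abelian topological group, that it carries the same (purely algebraic) $R$-module structure as $X$, and that the multiplication $*:R\times X^{\flat}\to X^{\flat}$ is separately continuous. The first requirement is immediate: the Bohr topology is by definition a totally bounded group topology, so $X^{\flat}$ is a topological group, and it is Hausdorff exactly because $X$ is assumed to be Bohr separated. The module axioms involve only the algebraic operations, which are unchanged when we pass from $X$ to $X^{\flat}$, so $X^{\flat}$ remains an $R$-module. Thus everything reduces to separate continuity of the multiplication, which I would split into continuity in each variable separately.

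For continuity in the second variable, fix $r\in R$. The map ${}^r*:X\to X$, $x\mapsto r*x$, is a group homomorphism by the module axioms and is continuous for the original topology of $X$ by the separate continuity of $*$ on $X$. Here both the domain and the codomain change their topology when we pass to the Bohr modification, so I would invoke the observation preceding the proposition: a continuous homomorphism between topological groups remains continuous between their Bohr modifications. Applying it to ${}^r*:X\to X$ yields that ${}^r*:X^{\flat}\to X^{\flat}$ is continuous.

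For continuity in the first variable, fix $b\in X$. The map $*^b:R\to X$, $\rho\mapsto\rho*b$, is a continuous homomorphism from the additive group of $R$ into $X$ with its original topology. The key point is that the Bohr topology is coarser than the original topology of $X$: by the characterization of the Bohr topology as the smallest topology for which every continuous homomorphism of $X$ into a Hausdorff compact group stays continuous, and since the original topology already makes all such homomorphisms continuous, the Bohr topology is contained in it. Consequently the identity map $X\to X^{\flat}$ is continuous, and composing it with $*^b$ shows that $*^b:R\to X^{\flat}$ is continuous.

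I do not expect a genuine obstacle here; the only subtlety is that the two directions of separate continuity rest on different facts. The second-variable direction is the substantive one, because both source and target get re-topologized, and it is precisely there that the functoriality of the Bohr modification (the observation cited above) is needed; the first-variable direction is nearly automatic, since $R$ keeps its topology and the target only becomes coarser.
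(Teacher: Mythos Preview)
Your argument is correct and matches the paper's approach: the paper does not give a detailed proof but simply notes that the proposition follows from the observation that a continuous homomorphism $h:X\to Y$ between topological groups remains continuous as a map $h:X^{\flat}\to Y^{\flat}$, which is exactly the key ingredient you invoke for the second-variable direction. Your treatment of the first-variable direction via the coarseness of the Bohr topology is a natural unpacking of the same idea.
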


By definition, the Bohr modification $X^{\flat}$ of any topological group $X$ is totally bounded. It turns out that the Bohr modification of a semitopological module is multiplicatively bounded in the following sense.

\begin{proposition}\label{p4.2} Let $X$ be a semitopological module over a topological ring $R$. For every neighborhood $U\subseteq X^{\flat}$ of zero there exists a number $n\in\IN$ such that for any subset $A\subseteq R$ of cardinality $|A|\ge n$ and any $x\in X$ there are two distinct points $a,b\in A$ such that $(a-b)*x\in U$.
\end{proposition}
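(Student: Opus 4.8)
The plan is to use the fact that basic neighborhoods of zero in the Bohr topology come from compact groups, and then to reduce the whole statement to a pigeonhole argument inside a single fixed compact Abelian group, where the uniformity of $n$ in the variable $x$ falls out automatically.

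First I would put $U$ into a convenient form. By the description of the Bohr topology as the smallest topology making all (originally) continuous homomorphisms into compact Hausdorff groups continuous, its neighborhoods of zero are finite intersections of preimages $h_i^{-1}(W_i)$; since compact groups are closed under finite products, there is a single continuous homomorphism $h\colon X\to K$ into a compact Hausdorff group $K$ and a neighborhood $W$ of the identity of $K$ with $h^{-1}(W)\subseteq U$. Because the module $X$ is Abelian, the image $h(X)$ is an Abelian subgroup of $K$, and so is its closure $\overline{h(X)}$. Replacing $K$ by $\overline{h(X)}$ and $W$ by $W\cap\overline{h(X)}$ (which does not change $h^{-1}(W)$), I may assume that $K$ is a compact \emph{Abelian} Hausdorff group, written additively.

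Next I would fix the constant $n$, depending on $U$ only. Using the continuity of the operations in $K$, choose a neighborhood $V$ of zero in $K$ with $V-V\subseteq W$. By compactness (hence total boundedness) of $K$, there is a finite set $\{k_1,\dots,k_m\}\subseteq K$ with $K=\bigcup_{i=1}^m(k_i+V)$. I then set $n=m+1$, and I stress that $m$, and hence $n$, is determined by $U$ alone and is independent of $x$; this uniformity is exactly the content of the proposition. Now fix any $A\subseteq R$ with $|A|\ge n$ and any $x\in X$. Since $*$ is a bi-homomorphism, the map $*^x\colon R\to X$ is an additive homomorphism, so $(a-b)*x=a*x-b*x$, and as $h$ is a homomorphism we get $h\big((a-b)*x\big)=h(a*x)-h(b*x)$ for all $a,b\in R$. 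Assigning to each $a\in A$ an index $i(a)$ with $h(a*x)\in k_{i(a)}+V$, the inequality $|A|\ge m+1$ forces two distinct $a,b\in A$ to share an index, whence $h(a*x)-h(b*x)\in V-V\subseteq W$; therefore $(a-b)*x\in h^{-1}(W)\subseteq U$, as required.

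I do not expect a serious obstacle here: the argument is essentially a total-boundedness pigeonhole. The two points that need genuine care are the reduction to a single compact \emph{Abelian} $K$ — where Abelianness (coming from $X$ being an Abelian group) is what lets me pass from ``two values in a common translate $k_i+V$'' to ``their difference lies in $V-V$'' — and the observation that the covering number $m$ is computed once, before $x$ is chosen, which is precisely what makes the bound $n$ uniform over all $x\in X$ and all subsets $A$.
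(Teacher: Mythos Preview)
Your argument is correct and follows the same overall strategy as the paper: reduce $U$ to a preimage under a continuous homomorphism $h$ into a compact Abelian group, then run a pigeonhole argument inside that compact group to get two elements of $A$ whose images under $a\mapsto h(a*x)$ are close, uniformly in $x$.

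The only difference is in how the pigeonhole is implemented. The paper invokes the Haar measure $\mu$ on the compact group and chooses $n$ with $\tfrac1n<\mu(W)$, so that the translates $h(a{*}x)+W$ for $a\in A$ cannot be pairwise disjoint; you instead use total boundedness directly, covering $K$ by $m$ translates of a set $V$ with $V-V\subseteq W$ and taking $n=m+1$. Your version is a bit more elementary since it avoids Haar measure entirely, while the paper's version gives a slightly more explicit bound on $n$ in terms of $\mu(W)$; neither advantage matters for the application. Both arguments hinge on exactly the two points you flagged: passing to an Abelian target so that ``same translate'' implies ``difference in $V-V$'', and fixing the covering (or measure) data before $x$ is chosen, which is what makes $n$ depend only on $U$.
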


\begin{proof} By the definition of the Bohr topology on $X$, there exist a continuous homomorphism $h:X\to H$ to a compact Abelian topological group and a neighborhood $W\subseteq H$ of the neutral element such that $h^{-1}(W-W)\subseteq U$. Replacing $H$ by the closure of $h(X)$, we can assume that $h(X)$ is dense in $H$. Let $\mu$ be the Haar measure on $H$ and $n\in\IN$ be any number such that $\frac1n<\mu(W)$. Let $A\subseteq R$ be any subset of cardinality $|A|\ge n$. For any points $x\in X$ and $a\in A$, consider the point $a{*}x$ and the set $h(a{*}x)+W\subseteq H$. Observe that $\mu(h(a{*}x)+W)=\mu(W)>\frac1{n}\ge\frac1{|A|}$, which implies that the indexed family $\big(h(a{*}x)+W)_{a\in A}$ is not disjoint and hence there are two distinct points $a,b\in A$ such that $(h(a{*}x)+W)\cap (h(b{*}x)+W)\ne\emptyset$. Then $h((a-b)*x)=h(a{*}x)-h(b{*}x)\in W-W$ and hence $(a-b)*x\in h^{-1}(W-W)\subseteq U$.
\end{proof}

We shall use Proposition~\ref{p4.2} to detect topological rings $R$ such that for any non-trivial semitopological $R$-module $X$ its Bohr modification $X^{\flat}$ is not a topological $R$-module.

\begin{definition} A topological ring $R$ is called
\begin{itemize}
\item {\em crowded} if every neighborhood of zero in $R$ contains an invertible element;
\item {\em overcrowded} if $R$ is crowded and for every $n\in\IN$ there is a set $A\subseteq X$ of cardinality $|A|=n$ such that for any distinct elements $a,b\in A$ the element $a-b$ is invertible in $R$.
\end{itemize}
\end{definition}

 It is clear that every crowded topological ring is not discrete; a topological field $R$ is overcrowded if and only if it is crowded if and only if $R$ is not discrete. 

\begin{theorem}\label{t4.4} Let $X$ be a Bohr separated semitopological module over a topological ring $R$. If the completion $\breve R$ of $R$ is a crowded topological ring, then the Bohr modification $X^{\flat}$ of $X$ is not a topological $R$-module. Moreover, for any non-zero element $x\in X$ the homomorphism $*_x:R\to X^{\flat}$, $*_x:r\mapsto r* x$, is not a topological embedding.
\end{theorem}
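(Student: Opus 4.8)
The plan is to isolate a single \emph{contraction lemma} and apply it twice. The lemma I would prove first is: \emph{a compact topological module $M$ over a crowded topological ring $S$ is trivial}. To prove it, fix $x\in M$ and, using crowdedness of $S$, choose a net $(u_i)$ of invertible elements of $S$ converging to $0$. The points $y_i:=u_i^{-1}*x$ all lie in the compact space $M$, so after passing to a subnet I may assume $y_i\to y$ for some $y\in M$. Since $u_i*y_i=(u_iu_i^{-1})*x=1*x=x$ for every $i$, whereas joint continuity of the action gives $u_i*y_i\to 0*y=0$, I conclude $x=0$; as $x$ was arbitrary, $M=\{0\}$. The crucial point is that I extract the convergent subnet from the \emph{module} (which is compact) rather than from the net $(u_i^{-1})$ in $S$ (which need not converge), so compactness of $S$ is never needed here.

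To obtain the first assertion I would argue by contradiction, assuming $X^{\flat}$ is a topological $R$-module (the case $X=\{0\}$ being vacuous). Because the Bohr modification $X^{\flat}$ is totally bounded, its Ra\u\i kov completion $M:=\breve{X^{\flat}}$ is compact. The jointly continuous action $R\times X^{\flat}\to X^{\flat}$ is a jointly continuous bi-homomorphism, so by Theorem~\ref{p2.1} it extends to a jointly continuous bi-homomorphism $\breve R\times M\to M$, and by density of $R\times X^{\flat}$ the module identities persist, turning $M$ into a \emph{topological} $\breve R$-module. Since $\breve R$ is crowded and $M$ is compact, the contraction lemma forces $M=\{0\}$, whence $X^{\flat}\subseteq M$ is trivial and $X=\{0\}$, a contradiction.

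For the final ("moreover") assertion I would fix $x\ne 0$ and suppose, for a contradiction, that $*_x:R\to X^{\flat}$ is a topological embedding. Its image $R*x$ is a subgroup of the totally bounded group $X^{\flat}$, hence has compact closure in $\breve{X^{\flat}}$ and is therefore totally bounded; as $*_x$ is an embedding, $R$ is totally bounded in its own topology. (This total boundedness can equally well be extracted from Proposition~\ref{p4.2}, which bounds the size of $*_x^{-1}(U)$-separated subsets of $R$ and hence shows that finitely many translates of each $*_x^{-1}(U)$ cover $R$.) Consequently $\breve R$ is compact, and viewing $\breve R$ as a compact topological module over itself the contraction lemma yields $\breve R=\{0\}$, i.e. $1=0$; this contradicts $\breve R$ being a nontrivial ring, which holds since $1*x=x\ne 0$ forces $1\ne 0$. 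Hence $*_x$ is not a topological embedding.

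I expect the main obstacle to be the extension step in the second paragraph: one must check that the jointly continuous $R$-action on $X^{\flat}$ lifts to a \emph{jointly} continuous $\breve R$-action on the compact completion $M$, since the contraction lemma needs joint continuity of $u_i*y_i$. This is exactly where the last clause of Theorem~\ref{p2.1} is used, applied once to complete the module variable and once more, after transposing the two arguments, to complete the ring variable; one then verifies the module axioms on $M$ by continuity and density. The remaining ingredients -- total boundedness of $X^{\flat}$, compactness of the Ra\u\i kov completion of a totally bounded group, total boundedness of subgroups, and passage to a convergent subnet in a compact space -- are all standard.
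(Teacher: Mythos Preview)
Your proof is correct and takes a genuinely different route from the paper's. Your contraction lemma --- that a compact topological module over a crowded ring must be trivial --- is both cleaner and sharper than the paper's argument: the paper extends the action to $\breve R\times\breve Rx$ inside $\overline{X^\flat}$, then invokes Proposition~\ref{p4.2} to obtain, for a suitable neighborhood $W$, an $n$ such that every $n$-element subset of $\breve R$ contains two points whose difference sends $x$ into $W$; it then uses \emph{overcrowdedness} of $\breve R$ to manufacture an $n$-element set whose pairwise differences are all invertible and derives a contradiction by rescaling. In particular the paper's own proof actually consumes the stronger hypothesis that $\breve R$ be overcrowded, whereas your net-and-compactness argument works under the bare hypothesis of crowdedness that the theorem states. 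For the ``moreover'' clause the paper observes that if $*_x$ were a topological embedding then $a*(b*x)=*_x\big(a\cdot *_x^{-1}(b*x)\big)$ would render $*{\restriction}_{R\times R{*}x}$ continuous, contradicting the first part; your approach --- deduce total boundedness of $R$, hence compactness of $\breve R$, and apply the contraction lemma to $\breve R$ as a module over itself --- is self-contained and again avoids overcrowdedness. The extension step you flag is indeed the only delicate point, and your plan (two applications of the joint-continuity clause of Theorem~\ref{p2.1}, once in each variable, followed by a density argument for the module identities) is exactly right.
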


\begin{proof} Given any non-zero element $x\in X$, we shall prove that the restriction $*{\restriction}_{R\times R{*}x}$ of the multiplication map $*:R\times X^{\flat}\to X^{\flat}$ is discontinuous (here $R{*}x=\{r{*}x:r\in R\}$). To derive a contradiction, assume that this restriction is continuous. By Theorem~\ref{p2.1}, the continuous bi-homomorphism $*{\restriction}_{R\times R{*}x}$ extends to a continuous bi-homomorphism $\breve *:\breve R\times R{*}x\to \overline{X^\flat}$, where $\overline{X^\flat}$ is the Ra\u\i kov-completion of the totally bounded topological group $X^\flat$.
Consider the subset $\breve Rx=\{r\breve *x:r\in\breve R\}$ and observe that $\breve  Rx$ is contained in the closure $\overline{R{*}x}$ of $R{*}x$ in $\overline{X^\flat}$. Applying Theorem~\ref{p2.1} once more, find a continuous bi-homomorphism $\star:\breve R\times\breve Rx\to \overline{X^\flat}$ extending the bi-homomorphism $\breve *$. Using the continuity of $\star$ it is easy to show that $\star(\breve R\times \breve Rx)\subseteq\breve Rx$ and $\breve Rx$ is a topological $\breve R$-module.

Since the group $\breve Rx$ is Hausdorff, the set $\breve Rx\setminus\{x\}$ is an open
neighborhood of zero in $\breve Rx$. By the continuity of the operation $\star$, there exists a
neighborhood $V$ of zero in $\breve R$ and a neighborhood $W$ of zero in $\breve Rx$ such that
$V\star W\subseteq \breve Rx\setminus\{x\}$. Since the totally bounded topological group $\breve Rx\subseteq \overline{X^\flat}$
coincides with its Bohr modification, $W$ is a neighborhood of zero in the Bohr topology of the
group $\breve Rx$. By Proposition~\ref{p4.2}, there in a number $n\in\IN$ such that for any set
$A\subseteq \breve R$ of cardinality $|A|=n$ there are two distinct points $a,b\in A$ such that
$(a-b)\star x\in W$. The  ring $\breve R$, being overcrowded,  contains a set $B\subseteq \breve R$
of cardinality $|B|=n$ such that the set $D=\{a-b:a,b\in B,\;a\ne b\}$ consists of invertible elements of the ring $\breve R$. It follows that each element $a\in D$ has the inverse $a^{-1}$ in $R$ and hence the set $D^{-1}=\{a^{-1}:a\in D\}$ is well-defined. Using the continuity of multiplication at zero in the topological
ring $\breve R$, find a neighborhood $V_0\subseteq\breve R$ of zero such that $D^{-1}V_0\subseteq
V$. Since $\breve R$ is (over)crowded, the neighborhood $V_0$ contains an invertible element $c$.
By the choice of the number $n$, the $n$-element set $c^{-1}B$ contains two distinct elements
$c^{-1}a,c^{-1}b$ such that $(c^{-1}a-c^{-1}b)\star x\in W$. Then
$x=(a-b)^{-1}c(c^{-1}a-c^{-1}b)\star x\in D^{-1}V_0\star W\subseteq V\star W\subseteq \breve
Rx\setminus\{x\}$, which is a required contradiction completing the proof of the discontinuity of
the restriction $*{\restriction}_{R\times R{*}x}$.
\smallskip

Now we show that the map $*_x:R\to X^{\flat}$ is not a topological embedding. Assuming the opposite, we conclude that $*_x:R\to R{*}x$ is a homeomorphism and then $*{\restriction}_{R\times R{*}x}$ is continuous as a composition of continuous maps (since $a*(b*x)=(a{\cdot} b)*x=*_x(a\cdot *_x^{-1}(bx))$ for any $a,b\in R$). But this contradicts the discontinuity of $*{\restriction}_{R\times R{*}x}$ proved above.
\end{proof}

Since the fields $\IR$ and $\IC$ are overcrowded, Theorem~\ref{t4.4} implies the following corollary answering Problem~\ref{prob}.

\begin{corollary} For any non-trivial semitopological linear space $X$ over a topological field $R\in\{\IR,\IC\}$, the Bohr modification $X^{\flat}$ is not a topological linear space and for any element $x\in X$ the map $*_x:R\to X^{\flat}$, $*_x:\lambda\to\lambda\cdot x$, is not a topological embedding.
\end{corollary}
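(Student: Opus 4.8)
The plan is to derive the corollary directly from Theorem~\ref{t4.4}, so the real work is just verifying that its hypotheses hold for $R\in\{\IR,\IC\}$ and then disposing of the boundary case $x=0$, which Theorem~\ref{t4.4} does not cover. First I would dispatch the ring hypothesis. Both $\IR$ and $\IC$ with their Euclidean topologies are complete, so their completions coincide with themselves, $\breve R=R$. Moreover, as recorded after the definition of (over)crowded rings, a topological field is overcrowded if and only if it is crowded if and only if it is non-discrete; since the Euclidean topology on $\IR$ and $\IC$ is non-discrete, these fields are overcrowded, in particular crowded. Hence $\breve R=R$ is a crowded topological ring, which is precisely the ring-theoretic assumption needed.

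With this verified, for every non-zero $x\in X$ Theorem~\ref{t4.4} applies verbatim and yields both assertions of the corollary at once: the Bohr modification $X^{\flat}$ is not a topological $R$-module, i.e.\ not a topological linear space, and the homomorphism $*_x:R\to X^{\flat}$, $*_x:\lambda\mapsto\lambda\cdot x$, is not a topological embedding. It then remains only to extend the embedding statement from non-zero $x$ to all $x\in X$. For $x=0$ the map $*_0:R\to X^{\flat}$ sends every $\lambda$ to $\lambda\cdot 0=0$ and is therefore constant; since $R$ is non-trivial it is not injective, so it cannot be a topological embedding. This covers every $x\in X$ and completes the reduction.

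The step I would scrutinize most carefully is the standing hypothesis of Theorem~\ref{t4.4} that $X$ be \emph{Bohr separated}, which is also what guarantees (via the proposition preceding Proposition~\ref{p4.2}) that $X^{\flat}$ is a semitopological $R$-module in the first place. A general semitopological linear space need not be Bohr separated: as noted in the text, the spaces $L_p$ with $0<p<1$ carry no non-trivial characters, so their Bohr topology is not Hausdorff. The clean route is to read the corollary for Bohr separated $X$ and apply Theorem~\ref{t4.4} as above; this already yields an abundance of examples, since every locally convex linear topological space is Bohr separated. If instead one wants the statement for \emph{all} semitopological linear spaces, the main obstacle is precisely this non-separated branch: there $X^{\flat}$ is not Hausdorff, so it fails to be a topological linear space for that reason alone, but verifying that $*_x$ still fails to be an embedding requires a separate argument about the subspace topology induced on $R{*}x$ and is the one place where a direct citation of Theorem~\ref{t4.4} does not suffice.
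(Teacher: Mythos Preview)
Your approach is exactly the paper's: the corollary is stated immediately after Theorem~\ref{t4.4} with the one-line justification that $\IR$ and $\IC$ are overcrowded, so Theorem~\ref{t4.4} applies. You have simply spelled out the hypothesis check (completeness, non-discreteness $\Rightarrow$ overcrowded for fields) that the paper leaves implicit.

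Your additional remarks in fact go beyond the paper. The paper's statement of the corollary says ``for any element $x\in X$'' while Theorem~\ref{t4.4} only covers non-zero $x$; you correctly fill this gap with the trivial observation that $*_0$ is constant and hence not an embedding. Likewise, your scrutiny of the Bohr-separated hypothesis is well placed: Theorem~\ref{t4.4} assumes it explicitly, the corollary does not, and the paper offers no bridging argument. Your reading---that the corollary is intended for Bohr-separated $X$, which suffices to answer Problem~\ref{prob}---is the reasonable one, and your sketch of what would be needed in the non-separated case (where $X^\flat$ fails to be Hausdorff outright) is accurate.
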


Next, we construct an example of a semitopological $R$-module $X$ such that $X$ is not a topological $R$-module but for every non-zero element $x\in X$ the map $*_x:R\to X$, $*_x:r\mapsto r*x$, is a topological embedding.

Given a Bohr separated semitopological $R$-module $X$ over a topological ring $R$, consider the $R$-submodule $X^{<\w}=\{(x_n)_{n\in\w}\in X^\w:\exists n\in\w\;\forall m\ge n\; x_m=0\}$ of the countable power $X^\w$. Endow  $X^{<\w}$ with the group topology $\tau$ whose base at zero consists of the sets $(V,W)_n=X^{<\w}\cap (V^n\times W^{\w\setminus n})$ where $n\in\w$, $V$ is a neighborhood of zero in $X$ and $W$ is a neighborhood of zero in $X^{\flat}$. It is clear that the topology $\tau$ turns $X^{<\w}$ into a Bohr separated semitopological $R$-module, which will be denoted by $X^{\natural\w}$.

The following example yields a more refined counterexample to Problem~\ref{prob}.

\begin{example} Let $R$ be a topological ring with overcrowded  completion $\breve R$. For any non-trivial Bohr separated topological $R$-module $X$, the Bohr separated semitopological $R$-module $X^{\natural\w}$ is not a topological $R$-module but any finitely generated $R$-submodule of $X^{\natural\w}$ is a topological $R$-module.
\end{example}

We shall apply Theorem~\ref{t4.4} to construct an example of a semitopological $R$-module $X$ whose
semitopological $R$-completion $\widetilde X$ is strictly smaller than its Ra\u\i kov completion
$\breve X$.

\begin{proposition} Let $R$ be a Baire topological ring whose Ra\u\i kov completion $\breve R$ is overcrowded. For any totally bounded semitopological $R$-module $X$ the $R$-completion $\widetilde X$ of $X$ is strictly smaller than the Ra\u\i kov completion $\breve X$ of $X$.
\end{proposition}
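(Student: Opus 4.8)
The plan is to argue by contradiction: suppose $\widetilde X=\breve X$. By the definition of $\widetilde X$ this means that the extended multiplication $\breve *:R\times\breve X\to\breve X$ is continuous in the ring variable at every point of $\breve X$; since $\breve *$ is right-continuous by construction, it is then separately continuous, so $\breve X$ is a semitopological $R$-module. Because $X$ is totally bounded, so is its Ra\u\i kov completion, and hence $\breve X$ is compact (being complete and totally bounded). Fix a non-zero element $x\in X$ (we may assume $X\ne\{0\}$, the claim being about a strict inclusion) and put $M=\overline{R*x}\subseteq\breve X$. As a closed subset of the compact group $\breve X$, the set $M$ is compact; moreover, for each $r\in R$ the continuous homothety $y\mapsto r\breve* y$ carries $R*x$ into itself and hence carries $M$ into $M$, so $M$ is a closed $R$-submodule of $\breve X$. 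Thus $M$ is a compact semitopological $R$-module containing $x=1*x\ne 0$.

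Next I would promote $M$ to a \emph{topological} $R$-module by the sep-joint automatic machinery, exploiting the fact that now the module $M$ is (locally) compact while the ring $R$ is Baire. The restriction $*{\restriction}_{R\times M}:R\times M\to M$ is a separately continuous bi-homomorphism. Since $M$ is compact, hence locally compact, and $R$ is Baire, Theorem~\ref{t2.8}(3) shows that the pair $(M,R)$ is sep-joint automatic, and then Theorem~\ref{t2.8}(1) shows that $(R,M)$ is sep-joint automatic as well. Consequently the separately continuous bi-homomorphism $*{\restriction}_{R\times M}$ is jointly continuous, which means that $M$ is a topological $R$-module. Note the roles here are the reverse of those in Theorem~\ref{t3.4}: there $R$ is locally compact and the module is Baire, whereas here the Baire and local-compactness hypotheses are attached to $R$ and $M$ respectively.

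Finally I would extract the contradiction from Theorem~\ref{t4.4}. The compact Hausdorff abelian group $M$ is totally bounded and its characters separate points, so the embedding of $M$ into the product of its characters is a topological embedding and the Bohr topology of $M$ coincides with its original topology; that is, $M^{\flat}=M$ and $M$ is Bohr separated. Since $\breve R$ is overcrowded, hence crowded, Theorem~\ref{t4.4} applies to the non-trivial Bohr separated semitopological $R$-module $M$ and asserts that $M^{\flat}$ is \emph{not} a topological $R$-module. As $M^{\flat}=M$, this contradicts the previous paragraph. Hence the assumption $\widetilde X=\breve X$ is untenable, and since $\widetilde X\subseteq\breve X$ always holds, we conclude $\widetilde X\subsetneq\breve X$. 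The main obstacle is the middle step: recognising that the compactness of $\breve X$ forced by total boundedness is exactly what lets one invoke automatic joint continuity with the hypotheses distributed as $R$ Baire and $M$ locally compact, and then pairing this with the elementary but crucial observation $M^{\flat}=M$, which is what pits Theorem~\ref{t2.8} directly against Theorem~\ref{t4.4}.
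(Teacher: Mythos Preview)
Your proof is correct and follows essentially the same path as the paper's: assume $\widetilde X=\breve X$, use total boundedness to get that $\breve X$ is compact, apply Theorem~\ref{t2.8}(3) (with the roles reversed via~(1): module locally compact, ring Baire) to upgrade to a topological $R$-module, and then derive a contradiction from Theorem~\ref{t4.4} using that a compact group coincides with its Bohr modification. The only difference is that you pass to the closed cyclic submodule $M=\overline{R*x}$ before invoking these theorems, whereas the paper applies them directly to $\breve X$; this detour is harmless but unnecessary, since $\breve X$ itself is already compact, Bohr separated (because $\breve X=(\breve X)^{\flat}$), and non-trivial once $X\ne\{0\}$.
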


\begin{proof} The total boundedness of $X$ implies that $X=X^\flat$ and the Ra\u\i kov completion $\breve X$ of $X$ is compact. Assuming that $\widetilde X=\breve X$, we conclude that $\breve X$ is a compact semitopological $R$-module. By Theorem~\ref{t2.8}, $\breve X$ is a topological $R$-module. But this contradicts Theorem~\ref{t4.4}.
\end{proof}

In light of Theorem~\ref{t4.4}, it is important to detect topological rings with overcrowded Ra\u\i
kov completions. We shall show that the class of such topological rings includes normable
topological rings with crowded Ra\u\i kov completions.

A topological ring $R$ is {\em normable} if its topology is generated by some norm $|\cdot|:R\to \IR$. A function $|\cdot|:R\to\IR$ on a ring $R$ is called a {\em norm} if for any $x,y\in R$ the following conditions are satisfied:
\begin{itemize}
\item $|x|\ge 0$ and $|x|=0$ iff $x=0$;
\item $|x+y|\le|x|+|y|$;
\item $|xy|=|x|\cdot|y|$.
\end{itemize}
Such norm $|\cdot|$ generates the invariant metric $d:R\times R\to\IR$, $d:(x,y)\mapsto|x-y|$ on $R$, which generates a topology turning $R$ into a topological ring.

\begin{proposition}\label{p4.8} A complete normed topological ring $R$ is overcrowded if and only if $R$ is crowded.
\end{proposition}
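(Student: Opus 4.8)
The implication from overcrowded to crowded is immediate from the definitions, so the plan is to prove only the converse: a complete normed ring $R$ that is crowded is overcrowded. Throughout I may assume $R\neq\{0\}$, so that multiplicativity of the norm gives $|1|=|1|\cdot|1|$ and hence $|1|=1$.

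The one tool I would isolate first is the classical Neumann-series lemma: if $u\in R$ satisfies $|u|<1$, then $1-u$ is invertible. This is the only point at which completeness is used. Since the norm is multiplicative, $|u^k|=|u|^k$, so the partial sums of $\sum_{k=0}^{\infty}u^k$ are Cauchy (their tails are dominated by the geometric quantity $|u|^{N+1}/(1-|u|)$); completeness yields a limit $s\in R$, and passing to the limit in the identity $(1-u)\sum_{k=0}^{N}u^k=1-u^{N+1}$, using continuity of multiplication and $|u^{N+1}|=|u|^{N+1}\to 0$, gives $(1-u)s=s(1-u)=1$. Thus $1-u$ is invertible.

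With this lemma available, the construction is to use the powers of a single sufficiently small invertible element. Crowdedness applied to the neighbourhood $\{x\in R:|x|<1\}$ of zero furnishes an invertible element $a$ with $|a|<1$. Fixing an arbitrary $n\in\IN$, I would take $A=\{a,a^2,\dots,a^n\}$ and check the two defining requirements of overcrowdedness. First, the listed elements are pairwise distinct: an equality $a^i=a^j$ with $i<j$ would, after multiplication by $(a^{-1})^i$, give $1=a^{j-i}$, contradicting $|a^{j-i}|=|a|^{j-i}<1=|1|$. Second, for $i>j$ the difference factors as $a^i-a^j=a^{j}(a^{i-j}-1)$, in which $a^{j}$ is invertible as a power of the invertible element $a$ and $a^{i-j}-1=-(1-a^{i-j})$ is invertible by the Neumann lemma (since $|a^{i-j}|=|a|^{i-j}<1$); as a product of invertible elements it is itself invertible. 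Since $n$ was arbitrary, $R$ is overcrowded.

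The only substantive step is the Neumann-series lemma, that is, converting completeness together with multiplicativity of the norm into invertibility of $1-u$; once this is in hand the remainder is routine verification. The one choice that must be made correctly is the witnessing set: taking the geometric set $\{a^k\}_{k=1}^{n}$ rather than, say, the integer multiples $\{k\cdot a\}$ is essential, because an integer multiple of an invertible element need not be invertible, whereas every pairwise difference of distinct powers of $a$ factors through a term $1-a^{m}$ with $|a^{m}|<1$.
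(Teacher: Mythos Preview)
Your proof is correct and follows essentially the same route as the paper's: both arguments hinge on the Neumann-series fact that $1-u$ is invertible whenever $|u|<1$ (using completeness), then pick an invertible element $a$ of norm $<1$ via crowdedness and take the witnessing set to be the powers $\{a^k\}$, factoring each difference as $a^j(1-a^{i-j})$. Your write-up is in fact a bit more careful than the paper's sketch, explicitly verifying $|1|=1$ and the pairwise distinctness of the powers.
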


\begin{proof} Assume that $R$ is crowded and fix a norm $|\cdot|:R\to\IR$ generating the topology of $R$. By a standard argument it can be shown that for any element $x\in R$ of norm $|x|<1$, the element $1-x$ is invertible in $R=\breve R$ and $(1-x)^{-1}=\sum_{n=0}^\infty x^n$. Since $R$ is crowded, it contains an invertible element $x$ with norm $0<|x|<1$. Consider the set $A=\{x^n:n\in\IN\}$ and observe that for any numbers $n<m$
the difference $x^n-x^m=x^n(1-x^{m-n})$ is invertible in $R$.
\end{proof}

Both completeness and normability of the ring $R$ in Proposition~\ref{p4.8} is essential as shown by the following examples.

\begin{example}  The smallest subring $R$ of $\IR$ that contains the transcendent numbers $\pi$ and $\pi^{-1}$ is crowded but not overcrowded.
\end{example}

We recall that an {\em ordered field} is a field $F$ endowed with a linear order such that for any elements $x,y,a\in F$ the inequality $x<y$ implies $x+a<y+a$ and the inequalities $0<x$, $0<y$ imply $0<xy$.
It is known that an ordered field has characteristic zero and hence contains a subfield isomorphic to the field $\IQ$ of rational numbers.
Each ordered field $F$ carries the order topology generated by open order intervals.

\begin{example} There exists a subring $R$ of a countable ordered field $F$ such that $R$ is crowded but the completion $\breve R$ is not overcrowded.
\end{example}

\begin{proof} Let $x_0=1$ and for every $n\in\IN$ choose a positive real number $x_n$ which does not belong to the smallest algebraically closed field $F_n\subseteq \IC$ containing the set $\{x_i\}_{i<n}$. Consider the real field $F=\bigcup_{n\in\w}(F_n\cap\IR)$. It is easy to see that the field $F$ can be ordered so that $0<x_n<y$ for all $n\in\IN$ and $y\in F_n\cap \IR$. Endow the field $F$ with the topology generated by the linear order. Let $R$ be the smallest subring of $F$ containing the set $\{x_n,x_n^{-1}:n\in\w\}$. It can be shown that $R$ is crowded and its completion $\breve R$ is not overcrowded.
\end{proof}

\section{Bornomodifications of semitopological modules}\label{s5}

In this section given a topological ring $R$ and a family $\cB$ of bounded subsets on $R$, we discuss the (functorial) construction of  $\cB$-modification acting in the category of semitopological $R$-modules.

Let $R$ be a topological ring. A subset $B\subseteq R$ is called {\em bounded} in $R$ if for any neighborhood $U\subseteq R$ of zero there is a neighborhood $V\subseteq R$ of zero such that $B\cdot V\subseteq U$. So, $B$ is $R$-bounded with respect to the right action of $R$ on $R$.

Observe that for any bounded sets $A,B$ in a topological ring $R$ the product $AB$ is  bounded in $R$. Indeed, for any neighborhood $U\subseteq R$ of zero the boundedness of $A$ yields a neighborhood $V\subseteq R$ of zero such that $AV\subseteq U$ and the boundedness of $B$ yields a neighborhood $W\subseteq R$ of zero such that $BW\subseteq V$. Then $(AB)W=A(BW)\subseteq AV\subseteq U$, witnessing that the set $AB$ is bounded in $R$.

\begin{definition}
A family $\mathcal B$ of bounded subsets of a topological ring $R$ is called a {\em bornology} on $R$ if
\begin{enumerate}
\item for any set $B\in\mathcal B$, any subset of $B$ belongs to $\mathcal B$;
\item for any sets $A,B\in\mathcal B$ the union $A\cup B$ belongs to $\mathcal B$;
\item for any set $B\in\mathcal B$ and element $r\in R$ the set $B\cdot r$ belongs to $\mathcal B$;
\item $\bigcup\mathcal B=R$.
\end{enumerate}
\end{definition}

A topological ring $R$ is defined to be {\em locally $\cB$-bounded} for a bornology $\cB$ on $R$ if
for any set $B\in\cB$ there is a neighborhood $V\subseteq R$ of zero such that $BV\in\cB$.  A
topological ring is {\em locally  bounded} if $R$ is locally $\M$-bounded for the maximal bornology
$\M$ consisting of all bounded subsets of $R$.  It is easy to show that a 
topological ring is locally bounded iff it has a bounded neighborhood of the zero.
\smallskip

Let $X$ be a semitopological module over a topological ring $R$. For two subsets $A,B\subseteq X$ we write $A\Subset B$ if $A+U\subseteq B$ for some neighborhood $U\subseteq X$ of zero in $X$. For any subsets $U\subseteq X$ and $B\subseteq R$ consider the set $$U/B=\{x\in U:B*x\Subset U\}.$$

It is clear that for any sets $U\subseteq V\subseteq X$ and $A\subseteq B\subseteq R$ we have $U/B\subseteq V/A$. Consequently, for any sets $U,V\subseteq X$ and $A,B\subseteq R$ we have $(U\cap V)/(A\cup B)\subseteq (U/A)\cap(V/B)$.

Let $\mathcal \cB$ be a bornology on a topological ring $R$. For any semitopological $R$-module $X$ denote by  $\tau^{\sharp\cB}$ the family of all sets $W\subseteq X$ such that for every point $w\in W$ there exist a neighborhood $U$ of zero in $X$ and a bounded set $B\in\cB$ such that $w+U/B\subseteq W$. It is clear that $\tau^{\sharp\cB}$ is a translation invariant topology on $X$ containing the original topology of $X$.

The $R$-module $X$ endowed with the topology $\tau^{\sharp\cB}$ will be denoted by  $X^{\sharp\cB}$ and called the {\em $\cB$-modification} of $X$.

\begin{proposition} Let $\mathcal B$ be a bornology on a topological ring $R$. For any semitopologial $R$-module $X$ its
$\cB$-modification $X^{\sharp\cB}$ is a semitopological $R$-module.
\end{proposition}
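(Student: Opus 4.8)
The plan is to verify directly the three defining properties of a semitopological $R$-module for $X^{\sharp\cB}$: that $\tau^{\sharp\cB}$ is Hausdorff, that it is a group topology, and that the multiplication $*:R\times X^{\sharp\cB}\to X^{\sharp\cB}$ is separately continuous. Since $\tau^{\sharp\cB}$ contains the original Hausdorff topology of $X$, it is automatically Hausdorff, and since it is translation invariant I may work throughout with the sets $U/B$, where $U$ ranges over neighborhoods of zero in $X$ and $B$ over $\cB$. These form a filter base at zero: this is exactly the inclusion $(U\cap V)/(A\cup B)\subseteq(U/A)\cap(V/B)$ recorded above, together with the bornology axiom $A\cup B\in\cB$. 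Everything will rest on a handful of elementary inclusions for the operation $U/B$, which I would record first.

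Using that each $b\in B$ acts on $X$ as an additive homomorphism, so $b*(x+y)=b*x+b*y$ and $b*(-x)=-(b*x)$, and using that $B*y\subseteq V$ whenever $B*y\Subset V$, one checks the two inclusions $(U/B)+(V/B)\subseteq(U+V)/B$ and $-(U/B)\subseteq(-U)/B$. Combined with the monotonicity $U/B\subseteq V/A$ for $U\subseteq V$ and $A\subseteq B$, these give the group-topology axioms: given a basic neighborhood $U/B$, a neighborhood $U'$ of zero in $X$ with $U'+U'\subseteq U$ yields $(U'/B)+(U'/B)\subseteq(U'+U')/B\subseteq U/B$, while $-\big((-U)/B\big)\subseteq U/B$ handles inversion. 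Hence $\tau^{\sharp\cB}$ is a group topology and the sets $U/B$ are genuinely a neighborhood base at zero.

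For separate continuity I would exploit the identity $B*(r*x)=(Br)*x$, which follows from the module axiom $(br)*x=b*(r*x)$, together with the bornology axiom $Br=B\cdot r\in\cB$. To see that ${}^r*:x\mapsto r*x$ is continuous (it suffices to check this at $0$, by additivity and translation invariance), given $U/B$ I would put $B'=Br\in\cB$ and choose, by continuity of ${}^r*$ in the original topology, a neighborhood $U'\subseteq U$ of zero in $X$ with $r*U'\subseteq U$. Then for $x\in U'/B'$ one has $r*x\in U$ and $B*(r*x)=(Br)*x=B'*x\Subset U'\subseteq U$, so $r*x\in U/B$; thus $r*(U'/B')\subseteq U/B$.

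The hard part is the continuity of $*_x:r\mapsto r*x$ for fixed $x$, and this is precisely where the boundedness of the sets $B\in\cB$ enters. Reducing again to continuity at $0\in R$, given $U/B$ I would first pick a neighborhood $P$ of zero in $X$ with $P+P\subseteq U$, then a neighborhood $O$ of zero in $R$ with $O*x\subseteq P$ (continuity of $*_x$ in the original topology), so that $O*x\Subset U$. Since $B$ is bounded in $R$, there is a neighborhood $V$ of zero in $R$ with $B\cdot V\subseteq O$; intersecting $V$ with a neighborhood $V'$ satisfying $V'*x\subseteq U$ I may also arrange $r*x\in U$ for all $r\in V$. Then for each such $r$ we get $(Br)*x\subseteq(BV)*x\subseteq O*x\Subset U$, that is $B*(r*x)\Subset U$, whence $r*x\in U/B$ and $V*x\subseteq U/B$. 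This yields the remaining separate continuity and completes the verification; the only genuinely delicate point is the last one, where the two-level structure of $U/B$ forces one to absorb $(BV)*x$ into a single neighborhood of zero, which is possible only because every member of a bornology is a bounded subset of $R$.
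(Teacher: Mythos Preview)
Your proof is correct and follows essentially the same route as the paper's: both verify directly that the filter base $\{U/B:U\ni 0\text{ open in }X,\ B\in\cB\}$ satisfies the axioms for a group topology at zero, and both handle separate continuity of the multiplication via the identities $B*(r*x)=(Br)*x$ (for ${}^r*$) and $BV\subseteq O$ from boundedness (for $*_x$). The only organizational difference is that the paper first proves each $U/B$ is itself $\tau^{\sharp\cB}$-open, whereas you work with the $U/B$ purely as a neighborhood filter base; your treatment of ${}^r*$ is in fact slightly more careful, since you explicitly arrange $r*U'\subseteq U$ to secure $r*x\in U$, a point the paper glosses over.
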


\begin{proof} First we check that for any open neighborhood $U\subseteq X$ of zero and any bounded set $B\in\cB$, the set $U/B$ belongs to the topology $\tau^{\sharp\cB}$. Fix any point $x\in U/B\subseteq U$ and find a neighborhood $V\subseteq U$ of zero such that $V+B*x\subseteq U$. We claim that $x+V/B\subseteq U/B$. Indeed, for any $y\in V/B$ we can find a neighborhood $W\subseteq V$ of zero such that $W+B*y\subseteq V$. Then $W+B*(x+y)\subseteq W+B*y+B*x\subseteq V+B*x\subseteq U$, which yields the desired inclusion $x+y\in U/B$. So, the set $U/B$ belongs to the topology $\tau^{\sharp\cB}$.

Now we can prove that the addition operation $+:X^{\sharp\cB}\times X^{\sharp\cB}\to X^{\sharp\cB}$
is continuous. Since the topology $\tau^{\sharp\cB}$ is invariant under translations, it suffices
to prove that the addition is continuous at zero. Fix any neighborhood $W\in\tau^{\sharp\cB}$ of
zero and find an open neighborhood $U\subseteq X$ of zero and a bounded set $B\in\cB$ such that
$U/B\subseteq W$. By the continuity of the addition in $X$, there is an open neighborhood
$V\subseteq X$ of zero such that $V+V\subseteq U$. We claim that the open neighborhood
$V/B\in\tau^{\sharp\cB}$ has the property $V/B+V/B\subseteq U/B\subseteq W$. Given any points
$x,y\in V/B$, find a neighborhood $V_0\subseteq X$ of zero such that $V_0+B*x\subseteq V$ and
$V_0+B*y\subseteq V$. Then $V_0+B*(x+y)\subseteq V_0+V_0+B*x+B*y\subseteq V+V\subseteq U$ and hence
$x+y\in U/B\subseteq W$. Taking into account that the topology $\tau^{\sharp\cB}$ is invariant
under inversions, we conclude that $X^{\sharp\cB}$ is a topological group.

Next, we show that the multiplication map $*:R\times X^{\sharp\cB}\to X^{\sharp\cB}$ is separately continuous.

Fix any point $x\in X$ and consider the map $*_x:R\to X^{\sharp\cB}$, $*_x:r\mapsto r* x$. Since
$*_x$ is a homomorphism, it suffices to check the continuity of $*_x$ at zero. Fix any neighborhood
$W\in\tau^{\sharp\cB}$ of zero. By the definition of the topology $\tau^{\sharp\cB}$, there are a
neighborhood $U\subseteq X$ of zero and a bounded subset $B\in\cB$ such that $U/B\subseteq W$. Choose a neighborhood $U_0$ of zero in $X$ such that $U_0+U_0\subseteq U$. The continuity of the map $*_x:R\to X$
yields a neighborhood $V\subseteq R$ of zero such that $V*x\subseteq U_0$. By the boundedness of the set $B$ in $R$, there is a neighborhood $V'\subseteq R$ of zero such
that $BV'\subseteq V$. Then $U_0+B*(V'*x)=U_0+(BV')*x\subseteq U_0+ V*x\subseteq U_0+U_0\subseteq U$ and hence $B*(V'*x)\Subset U$ and finally, $V'*x\subseteq
U/B\subseteq W$.

Next, fix any $r\in R$ and consider the homomorphism ${}^r*:X^{\sharp\cB}\to X^{\sharp\cB}$,
${}^r*:x\mapsto r*x$. The continuity of ${}^r*$ will follow as soon as we check its continuity at
zero. Fix any neighborhood $W\in \tau^{\sharp\cB}$ of zero and find an open neighborhood
$U\subseteq X$ of zero and a bounded set $B\in\cB$ such that $U/B\subseteq W$.

Observe that $U/(Br)$ is a neighborhood of zero in the topology $\tau^{\sharp\cB}$ and $${}^r\!*(U/(Br))=r* \{x\in X:Br*x\Subset U\}\subseteq\{y\in X:B*y\Subset U\}\subseteq U/B\subseteq W.$$
\end{proof}

The following proposition shows that the construction of the $\cB$-modification is functorial in the category of semitopological $R$-modules.

\begin{proposition}\label{p5.4} Let $\mathcal \cB$ be a bornology on a topological ring $R$. Any continuous $R$-linear map $h:X\to Y$ between
semitopological $R$-modules remains continuous as a map $h:X^{\sharp\cB}\to Y^{\sharp\cB}$.
\end{proposition}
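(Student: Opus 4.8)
The plan is to reduce to continuity at the zero element and then, for each basic neighborhood of zero in $Y^{\sharp\cB}$, to produce a preimage neighborhood of the same type in $X^{\sharp\cB}$. Since $h$ is $R$-linear, it is in particular a homomorphism of the underlying additive groups, and both topologies $\tau^{\sharp\cB}$ are invariant under translations; hence it suffices to check that $h:X^{\sharp\cB}\to Y^{\sharp\cB}$ is continuous at zero. Recall from the proof of the preceding proposition that for an open neighborhood $U$ of zero in $Y$ and a set $B\in\cB$ the set $U/B$ belongs to $\tau^{\sharp\cB}$, and that by the definition of $\tau^{\sharp\cB}$ these sets $U/B$ form a base of neighborhoods of zero in $Y^{\sharp\cB}$. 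So I only need to find, for every such $U_Y/B$, a neighborhood of zero in $X^{\sharp\cB}$ mapped by $h$ into $U_Y/B$.

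Given $U_Y/B$ with $U_Y$ an open neighborhood of zero in $Y$ and $B\in\cB$, I would first use the continuity of addition in $Y$ to choose an open neighborhood $U_Y^0$ of zero in $Y$ with $U_Y^0+U_Y^0\subseteq U_Y$, and then set $U_X:=h^{-1}(U_Y^0)$, which is an open neighborhood of zero in $X$ by the continuity of $h:X\to Y$. By the preceding proposition the set $U_X/B$ is then a neighborhood of zero in $X^{\sharp\cB}$. The claim is that $h(U_X/B)\subseteq U_Y/B$, which yields $U_X/B\subseteq h^{-1}(U_Y/B)$ and proves continuity at zero.

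To verify the claim, take $x\in U_X/B$. From $x\in U_X$ we get $h(x)\in U_Y^0\subseteq U_Y$, which is the first condition for $h(x)\in U_Y/B$. For the second, note that $B*x\Subset U_X$ gives a neighborhood $V$ of zero in $X$ with $B*x+V\subseteq U_X$; as $0\in V$ this already yields $B*x\subseteq U_X=h^{-1}(U_Y^0)$. Applying $h$ and using $R$-linearity, so that $h(b*x)=b*h(x)$ for all $b\in B$ and hence $h(B*x)=B*h(x)$, we obtain $B*h(x)\subseteq U_Y^0$. Therefore $B*h(x)+U_Y^0\subseteq U_Y^0+U_Y^0\subseteq U_Y$, and since $U_Y^0$ is a neighborhood of zero in $Y$ this witnesses $B*h(x)\Subset U_Y$. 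Hence $h(x)\in U_Y/B$, as required.

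The step requiring care, and the main obstacle, is the verification of $B*h(x)\Subset U_Y$: since $h$ is not assumed to be open, the image $h(V)$ of the witnessing neighborhood $V$ need not be a neighborhood of zero in $Y$, so one cannot simply transport $B*x+V\subseteq U_X$ to conclude $B*h(x)+h(V)\subseteq U_Y$ with $h(V)$ playing the role of a neighborhood. This is precisely what motivates the preliminary splitting $U_Y^0+U_Y^0\subseteq U_Y$: I transport only the inclusion $B*x\subseteq U_X$ (obtained by taking the zero element of $V$) to $B*h(x)\subseteq U_Y^0$, and then recover the required room from the honest neighborhood $U_Y^0$ itself rather than from $h(V)$.
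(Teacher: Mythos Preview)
Your proof is correct and follows essentially the same approach as the paper's own proof: reduce to continuity at zero, split the target neighborhood via $U_Y^0+U_Y^0\subseteq U_Y$, pull back $U_Y^0$ to $U_X=h^{-1}(U_Y^0)$, and show $h(U_X/B)\subseteq U_Y/B$ by using $R$-linearity to turn $B*x\subseteq U_X$ into $B*h(x)\subseteq U_Y^0$. Your explicit discussion of why the splitting is needed (since $h$ need not be open) is a helpful clarification, but the argument itself matches the paper's line for line.
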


\begin{proof} Fix any neighborhood $W\subseteq Y^{\sharp\cB}$ of zero and find an open neighborhood $U\subseteq Y$ of zero and a bounded subset $B\in\cB$ such that $U/B\subseteq W$. Choose a neighborhood $U_0\subseteq Y$ of zero such that $U_0+U_0\subseteq U$. The continuity of the map $h:X\to Y$ yields a neighborhood $V\subseteq X$ of zero such that $h(V)\subseteq U_0$.

We claim that $h(V/B)\subseteq U/B$. Indeed, for any point $x\in V/B$ we get $B*x\subseteq V$ and hence $B* h(x)=h(B*x)\subseteq h(V)\subseteq U_0$. Then $U_0+B* h(x)\subseteq U_0+U_0\subseteq U$, which implies that $B* h(x)\Subset U$ and hence $h(x)\in U/B\subseteq W$.
\end{proof}

Now we show that for a locally $\cB$-bounded topological ring $R$ the bounded modification $X^{\sharp\cB}$ of any semitopological $R$-module $X$ is a topological $R$-module. We recall that a topological ring $R$ is {\em locally $\cB$-bounded} if for any set $B\in\cB$ there is a neighborhood $U\subseteq R$ of zero such that $BU\in\cB$.

\begin{theorem}\label{t5.5} If $R$ is a locally $\cB$-bounded topological ring for some bornology $\cB$ on $R$, then the $\cB$-modification $X^{\sharp\cB}$ of any semitopological $R$-module $X$ is a topological $R$-module.
\end{theorem}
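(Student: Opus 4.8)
The plan is to build on the preceding proposition, which already guarantees that $X^{\sharp\cB}$ is a semitopological $R$-module; in particular the addition is continuous, the multiplication $*:R\times X^{\sharp\cB}\to X^{\sharp\cB}$ is separately continuous, and it remains a bi-homomorphism. What is left to prove is the joint continuity of $*$, and I would reduce this to joint continuity at the origin. Indeed, for any $(r_0,x_0)$ the bilinearity identity
$$r*x-r_0*x_0=r_0*(x-x_0)+(r-r_0)*x_0+(r-r_0)*(x-x_0)$$
shows that continuity at $(r_0,x_0)$ follows once we control the three summands: the first two are handled by the separate continuity of ${}^{r_0}*$ and $*_{x_0}$, while the third is exactly joint continuity at $(0,0)$. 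So it suffices to find, for every $W\in\tau^{\sharp\cB}$ with $0\in W$, a neighborhood $V\subseteq R$ of zero and a neighborhood $O\in\tau^{\sharp\cB}$ of zero with $V*O\subseteq W$.

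To produce these, I would start from the definition of $\tau^{\sharp\cB}$ to choose an open neighborhood $U\subseteq X$ of zero and a bounded set $B\in\cB$ with $U/B\subseteq W$. Here I would use two preliminary normalizations. First, since $\bigcup\cB=R$, some member of $\cB$ contains the unit $1$; replacing $B$ by its union with that member (still in $\cB$, and only shrinking $U/B$) I may assume $1\in B$. Second, local $\cB$-boundedness supplies a neighborhood $V\subseteq R$ of zero with $BV\in\cB$. I then set $B'=BV\in\cB$ and $O=U/B'$; recall that $U/B'$ is a $\tau^{\sharp\cB}$-open neighborhood of zero by the argument already used in the preceding proposition.

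The crux is the verification $V*O\subseteq U/B\subseteq W$. Fix $v\in V$ and $x\in O=U/B'$. Because $1\in B$ we have $v=1\cdot v\in BV=B'$, and since $B'*x\Subset U$ forces $B'*x\subseteq U$, this yields $v*x\in U$. For the bounded condition, $v\in V$ gives $Bv\subseteq BV=B'$, hence $B*(v*x)=(Bv)*x\subseteq B'*x\Subset U$; thus $B*(v*x)\Subset U$. Together these two facts say precisely that $v*x\in U/B\subseteq W$, as required. The main obstacle is this last computation: it is where local $\cB$-boundedness is genuinely used (it is what makes $B'=BV$ a member of $\cB$, so that $O=U/B'$ is a legitimate basic neighborhood), and the small trick of enlarging $B$ to contain $1$ is what upgrades the inclusion $Bv\subseteq B'$ into the membership $v*x\in U$ needed for $v*x\in U/B$. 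Once joint continuity at the origin is established, the bilinearity identity of the first paragraph completes the proof that $X^{\sharp\cB}$ is a topological $R$-module.
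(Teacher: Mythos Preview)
Your proof is correct and follows the same strategy as the paper: reduce to continuity at the origin, pick $U/B\subseteq W$, use local $\cB$-boundedness to obtain $V$ with $BV\in\cB$, and take the $\tau^{\sharp\cB}$-neighborhood $O=U/(BV)$. The paper's version is terser and does not explicitly verify the first membership condition $v*x\in U$ in the definition of $U/B$; your normalization $1\in B$ handles this point cleanly.
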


\begin{proof} It suffices to check the continuity of the multiplication $*:R\times X^{\sharp\cB}\to X^{\sharp\cB}$ at zero. Fix a neighborhood $W\in\tau^{\sharp\cB}$ of zero and find a neighborhood $U\subseteq X$ of zero and a bounded set $B\in\cB$ such that $U/B\subseteq W$. Since $R$ is locally $\cB$-bounded, there exists a neighborhood $V\subseteq R$ of zero such that the set $BV$ belongs to the bornology $\cB$. Then the set $U/(BV)$ is a neighborhood of zero in $X^{\sharp\cB}$ such that for any $v\in V$ and $x\in U/(BV)$ we get $B*(v*x)\subseteq (BV)*x\Subset U$, which implies $v*x\in U/B\subseteq W$ and shows that the multiplication map $*:R\times X^{\sharp\cB}\to X^{\sharp\cB}$ is continuous.
\end{proof}

Observe that for any bornologies $\mathcal A\subseteq \cB$ on a topological ring $R$ and any semitopological $R$-module $X$ we get $\tau^{\sharp\!\A}\subseteq\tau^{\sharp\cB}$, which implies that the identity map $X^{\sharp\cB}\to X^{\sharp\!\A}$ is continuous.
In particular, for the bornology $\K$ of precompact subsets of $R$ and the bornology $\M$ of all bounded subsets of $R$ the identity maps
$X^{\sharp\M}\to X^{\sharp\K}\to X$ are continuous. These  two identity maps are homeomorphisms, provided the ring $R$ is crowded and $X$ is a topological $R$-module. We recall that a topological ring $R$ is crowded if any neighborhood of zero in $R$ contains an invertible element of $R$.

\begin{proposition}\label{p5.6} Let $R$ be a crowded topological ring and $\mathcal B$ be a bornology on $R$. For any topological $R$-module $X$, the identity map $\id:X^{\sharp\cB}\to X$ is a homeomorphism.
\end{proposition}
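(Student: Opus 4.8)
The plan is to prove that the two topologies coincide. Since the identity map $X^{\sharp\cB}\to X$ is continuous (because $\tau^{\sharp\cB}$ contains the original topology $\tau$ of $X$, as observed just before the statement), it remains to establish the reverse inclusion $\tau^{\sharp\cB}\subseteq\tau$, i.e. that every $W\in\tau^{\sharp\cB}$ is open in $X$. Unravelling the definition of $\tau^{\sharp\cB}$, this reduces to a single claim: \emph{for every neighborhood $U\subseteq X$ of zero and every bounded set $B\in\cB$, the set $U/B=\{x\in U:B*x\Subset U\}$ is a neighborhood of zero in the original topology of $X$.} Granting this claim, if $W\in\tau^{\sharp\cB}$ and $w\in W$, I would pick $U$ and $B$ with $w+U/B\subseteq W$; then $w+U/B$ contains a $\tau$-neighborhood of $w$, so $W$ is $\tau$-open, giving $\tau^{\sharp\cB}=\tau$ and hence the homeomorphism.

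To prove the claim I would combine three ingredients: the joint continuity of the multiplication (which is exactly what the hypothesis that $X$ is a \emph{topological} $R$-module provides), the boundedness of $B$ in $R$, and the crowdedness of $R$. First choose a neighborhood $U_0\subseteq X$ of zero with $U_0+U_0\subseteq U$, so in particular $U_0\subseteq U$. By joint continuity of $*:R\times X\to X$ at $(0,0)$ there are neighborhoods $V\subseteq R$ and $V_X\subseteq X$ of zero with $V*V_X\subseteq U_0$. Since $B$ is bounded in $R$, there is a neighborhood $V'\subseteq R$ of zero with $BV'\subseteq V$. This is precisely where crowdedness enters: I would select an invertible element $c\in V'$, so that $Bc\subseteq BV'\subseteq V$.

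The key observation is that multiplication by the invertible element $c$ is a homeomorphism of $X$: the homomorphism ${}^c*:X\to X$, $x\mapsto c*x$, is continuous (by separate continuity of $*$) with continuous inverse ${}^{c^{-1}}*$, and hence carries the neighborhood $V_X$ of zero onto a neighborhood $c*V_X$ of zero. I claim $(c*V_X)\cap U_0\subseteq U/B$. Indeed, any $x$ in this set lies in $U_0\subseteq U$ and has the form $x=c*y$ with $y\in V_X$, so that, using associativity $b*(c*y)=(bc)*y$,
$$B*x=(Bc)*y\subseteq (Bc)*V_X\subseteq V*V_X\subseteq U_0,$$
whence $B*x+U_0\subseteq U_0+U_0\subseteq U$, i.e. $B*x\Subset U$. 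Thus $x\in U/B$, which proves the claim.

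The main obstacle, and the genuinely new point, is that the bounded set $B$ need not be contained in the small neighborhood $V$ of zero in $R$ furnished by joint continuity, so one cannot directly control $B*V_X$. Crowdedness resolves this: absorbing $B$ into $V$ by an invertible scalar $c$ (made possible by the boundedness of $B$) and then transporting the resulting estimate back along the homeomorphism ${}^c*$ is exactly what turns $U/B$ into a genuine neighborhood of zero. I would expect the verification that ${}^c*$ is a homeomorphism and the bookkeeping with $\Subset$ to be routine, with the scalar-absorption step being the crux.
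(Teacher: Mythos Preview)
Your proof is correct and follows essentially the same approach as the paper's: reduce to showing that $U/B$ is a $\tau$-neighborhood of zero, use joint continuity of $*$ together with boundedness of $B$ to absorb $B$ into a small neighborhood via an invertible element furnished by crowdedness, and transport the resulting neighborhood through the homeomorphism given by that invertible scalar. The only cosmetic differences are that the paper combines the addition/multiplication step into a single choice $U_0+(V*U_0)\subseteq U$ (so that $r*U_0$ already lands in $U$ without an explicit intersection), whereas you split it as $U_0+U_0\subseteq U$ and $V*V_X\subseteq U_0$ and then intersect with $U_0$; both are equivalent bookkeeping.
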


\begin{proof} It suffices to check that for any neighborhood $U\subseteq X$ of zero and any bounded
subset $B\subseteq R$ the set $U/B$ is a neighborhood of zero in $X$. By the continuity of the
addition and multiplication, there exist a neighborhood $U_0\subseteq X$ of zero
in $X$ and a neighborhood $V\subseteq R$ of zero in $R$ such that $U_0+(V* U_0)\subseteq U$. The
boundedness of the set $B$ in $R$ yields a neighborhood $V_0\subseteq V$ such that $B\cdot
V_0\subseteq V$. Since $R$ is crowded, the neighborhood $V_0$ contains an invertible element $r$.
The invertibility of $r$ guarantees that the set $U_r=r* U_0$ is a neighborhood of zero in $X$. We
claim that $U_r\subseteq U/B$. Indeed, for any $x\in U_r$ we can find a point $y\in U_0$ with
$x=r*y$ and conclude that $B*x=B{\cdot}r * y\subseteq B\cdot V_0*U_0\subseteq V*U_0$ and $U_0+B*x\subseteq
U_0+V* U_0\subseteq U$, which implies $x\in U/B$. Therefore, $U/B$ is a neighborhood of zero in the
original topology of $X$ and the identity map $X^{\sharp\cB}\to X$ is a homeomorphism.
\end{proof}

\begin{theorem}\label{t5.7} Let $X$ be a semitopological module over a topological ring $R$.
\begin{enumerate}
\item If $R$ is crowded and locally $\cB$-bounded for some bornology $\cB$ on $R$, then any continuous $R$-homomorphism $h:Z\to X$ defined on a topological $R$-module $Z$ remains continuous as a map to $X^{\sharp\cB}$.
\item If every compact  set in $X$ is $R$-bounded, then any continuous map $f:Z\to X$ defined on a  $k$-space $Z$ remains continuous as a map to $X^{\sharp\K}$, where $\K$ is the bornology of precompact sets in $R$.
\end{enumerate}
\end{theorem}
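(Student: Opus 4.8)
The plan is to treat the two parts separately: part (1) is a short formal deduction from the functoriality of the $\cB$-modification together with Proposition~\ref{p5.6}, whereas part (2) requires genuine work exploiting the $k$-space structure of $Z$ and the joint-continuity statement of Theorem~\ref{t3.2}(1).

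For (1), I would argue as follows. Since $Z$ is a topological $R$-module and $R$ is crowded, Proposition~\ref{p5.6} tells me that the identity map $\id:Z^{\sharp\cB}\to Z$ is a homeomorphism; in other words, $Z$ and $Z^{\sharp\cB}$ carry the same topology. On the other hand, $h:Z\to X$ is a continuous $R$-linear map between semitopological $R$-modules (a topological $R$-module being in particular semitopological), so by the functoriality established in Proposition~\ref{p5.4} the map $h:Z^{\sharp\cB}\to X^{\sharp\cB}$ is continuous. Composing with the homeomorphism $\id:Z\to Z^{\sharp\cB}$ yields the continuity of $h:Z\to X^{\sharp\cB}$. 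Note that this argument uses only crowdedness of $R$; the local $\cB$-boundedness hypothesis is what guarantees, via Theorem~\ref{t5.5}, that the target $X^{\sharp\cB}$ is in fact a topological $R$-module.

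For (2), since $Z$ is a $k$-space it suffices to prove that the restriction $f{\restriction}_K$ is continuous as a map $K\to X^{\sharp\K}$ for every compact subset $K\subseteq Z$; the $k$-space property then forces $f:Z\to X^{\sharp\K}$ to be continuous. So I fix a compact $K\subseteq Z$ and a point $z_0\in K$, put $x_0=f(z_0)$, and take a basic $\tau^{\sharp\K}$-neighborhood of $x_0$, which I may assume is of the form $x_0+U/C$ for a neighborhood $U\subseteq X$ of zero and a precompact set $C\in\K$. The key observation is that the set $D=f(K)-f(K)$ is compact (being the image of $K\times K$ under $(z,z')\mapsto f(z)-f(z')$), hence $R$-bounded by hypothesis, and contains $0$ together with every difference $f(z)-x_0$, $z\in K$. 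By Theorem~\ref{t3.2}(1) the multiplication map $R\times D\to X$ is jointly continuous. Since $c*0=0$ for every $c$ in the compact set $\overline C$, a routine compactness argument (covering $\overline C$ by finitely many product neighborhoods coming from joint continuity at the points $(c,0)$) produces a neighborhood $U_1\subseteq X$ of zero such that $C*(D\cap U_1)\subseteq U_0$, where $U_0+U_0\subseteq U$ and $U_1\subseteq U$. Finally, continuity of $f{\restriction}_K$ into $X$ gives a neighborhood $O$ of $z_0$ in $K$ with $f(O)\subseteq x_0+U_1$; for $z\in O$ one then has $f(z)-x_0\in D\cap U_1$, so $C*(f(z)-x_0)\subseteq U_0$ and hence $C*(f(z)-x_0)\Subset U$, while also $f(z)-x_0\in U$. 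Thus $f(z)-x_0\in U/C$ and $f(O)\subseteq x_0+U/C$, as required.

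The main obstacle is the joint-continuity step in part (2): passing from the separate continuity built into a semitopological module to uniform control of the whole action of the compact set $C$ on elements of $X$ that are merely close to zero. This is exactly where the hypothesis that compact sets are $R$-bounded, fed into Theorem~\ref{t3.2}(1), does the essential work — it upgrades separate continuity to joint continuity on the compact (hence $R$-bounded) set $D$, after which the compactness of $\overline C$ and the continuity of $f{\restriction}_K$ are routine. Everything else (the reductions, the translation-invariance of $\tau^{\sharp\K}$, and the formal part (1)) is bookkeeping.
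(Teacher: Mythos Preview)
Your proposal is correct and follows essentially the same approach as the paper: part~(1) is obtained by composing Proposition~\ref{p5.4} with Proposition~\ref{p5.6}, and part~(2) reduces via the $k$-space property to compact $K\subseteq Z$, uses the compact $R$-bounded set $f(K)-f(K)$ and Theorem~\ref{t3.2}(1) to get joint continuity, and then a tube-lemma argument over the compact set $\overline C\subseteq R$. You are in fact slightly more careful than the paper in two places: you take the closure $\overline C$ of the precompact set (the paper simply calls it compact), and you explicitly ensure $U_1\subseteq U$ so that $f(z)-x_0\in U$, as required by the definition of $U/C$.
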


\begin{proof} 1. Assume that $R$ is crowded and locally $\cB$-bounded for some bornology $\cB$ on $R$. Let $h:Z\to X$ be a continuous $R$-homomorphism defined on a topological $R$-module $Z$. By Proposition~\ref{p5.4}, the map $h:Z^{\sharp\cB}\to X^{\sharp\cB}$ is continuous and by Proposition~\ref{p5.6}, the identity map $\id:Z\to Z^{\sharp\cB}$ is a homeomorphism. Then the map $h:Z\to X^{\sharp\cB}$ is continuous as a composition of two continuous maps.
\smallskip

2. Assume that each compact  set in $X$ in $R$-bounded and let $f:Z\to X$ be a continuous function
defined on a $k$-space $Z$. To show that the map $f:Z\to X^{\sharp\K}$ is continuous it suffices to check that for every
compact set
$K\subseteq Z$ the restriction $f{\restriction}_K:K\to X^{\sharp\K}$ is continuous at each point $z\in K$. Fix
any neighborhood $O_{f(z)}$ of $f(z)$ in $X^{\sharp\K}$  and find an open
neighborhood $U\subseteq X$ of zero and a compact set $B\subseteq R$ such that $f(z)+U/B\subseteq
O_{f(z)}$. Choose a neighborhood $U_0\subseteq X$ of zero such that $U_0+U_0\subseteq U$.

By our assumption, the compact  set $C=f(K)-f(K)$ is $R$-bounded in $X$. By
 Theorem~\ref{t3.2}, the restriction $*{\restriction}_{R\times C}:R\times C\to X$ is continuous. Since $B*\{0\}=\{0\}$ we can use the
compactness of $B$ and find a neighborhood $U_1\subseteq X$ of zero such that $B*(U_1\cap
C)\subseteq U_0$. By the continuity of $f:Z\to X$ at $z$, there is a neighborhood $O_z\subseteq K$ of
the point $z$ such that $f(O_z)\subseteq f(z)+U_1$. We claim that $f(O_z)\subseteq f(z)+U/B$.
Indeed, for any point $y\in O_z$ we get $f(y)-f(z)\in U_1\cap C$ and hence $B*(f(y)-f(z))\in
B*(U_1\cap C)\subseteq U_0$ and $U_0+B* (f(y)-f(z))\subseteq U_0+U_0\subseteq U$,  which implies $f(y)-f(z)\in U/B$ and
$f(O_z)\subseteq f(z)+U/B\subseteq O_{f(z)}$.
\end{proof}

\section{Free semitopological $R$-modules}\label{s6}

In this section we apply the results of the preceding section to study the structure of free
semitopological $R$-modules.

Let $R$ be a topological ring. For a topological space $X$ its {\em free semitopological $R$-module} is a pair $(M_{R}(X),i_X)$ consisting of a semitopological $R$-module $M_R(X)$ and a continuous map $i_X:X\to M_R(X)$ such that for any continuous map $f:X\to Y$ to a semitopological $R$-module $Y$ there exists a unique continuous $R$-homomorphism $\bar f:M_R(X)\to Y$ such that $f=\bar f\circ i_X$.
The standard category arguments show that for each topological space $X$ a free semitopological $R$-module exists and is unique up to an isomorphism.

By analogy we can introduce the notion of a {\em free topological $R$-module} over a topological space.

It turns out that in some cases the free semitopological $R$-module over a topological space coincides with its free topological $R$-module.

\begin{theorem}\label{t6.2} Assume that a topological ring $R$ is locally compact. Then for every $k$-space $X$ its free semitopological $R$-module $M_R(X)$ is a free topological $R$-module.
\end{theorem}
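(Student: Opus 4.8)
The plan is to exhibit $M_R(X)$ as its own bornomodification with respect to the bornology $\K$ of precompact subsets of $R$, exploiting the fact from Theorem~\ref{t5.5} that such a bornomodification is automatically a topological $R$-module, and then to force the two topologies to coincide via the universal property.

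First I would record two consequences of the local compactness of $R$. Since $R$ is locally compact and Tychonoff, it is \v Cech-complete and hence strongly Baire; by Theorem~\ref{t3.2}(2), every compact subset of the semitopological $R$-module $M:=M_R(X)$ is therefore $R$-bounded. Next, the precompact sets form a bornology $\K$ on $R$ (subsets, finite unions and right translates of precompact sets are precompact, and singletons cover $R$), and $R$ is locally $\K$-bounded: for precompact $B$ pick a compact neighborhood $V$ of zero, so that $\overline B\cdot V$ is compact and $BV\subseteq\overline B\,V\in\K$. Consequently Theorem~\ref{t5.5} applies and $M^{\sharp\K}$ is a topological $R$-module; in particular it is a semitopological $R$-module whose topology $\tau^{\sharp\K}$ refines the original topology of $M$, so the identity map $\id:M^{\sharp\K}\to M$ is continuous.

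The heart of the argument is to run the identity the other way. Because every compact subset of $M$ is $R$-bounded and $X$ is a $k$-space, Theorem~\ref{t5.7}(2) shows that the canonical map $i_X:X\to M$ remains continuous as a map $i_X:X\to M^{\sharp\K}$. Since $M^{\sharp\K}$ is a semitopological $R$-module, the universal property of the free semitopological module $M$ yields a unique continuous $R$-homomorphism $\bar i:M\to M^{\sharp\K}$ with $\bar i\circ i_X=i_X$. Composing with $\id:M^{\sharp\K}\to M$ gives a continuous $R$-homomorphism $\id\circ\bar i:M\to M$ satisfying $(\id\circ\bar i)\circ i_X=i_X$; by the uniqueness clause of the universal property $\id\circ\bar i=\id_M$, which forces $\bar i$ to be the set-theoretic identity. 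Hence $\id:M\to M^{\sharp\K}$ is continuous, the two topologies coincide, and $M=M^{\sharp\K}$ is a topological $R$-module.

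Finally I would upgrade \emph{free semitopological} to \emph{free topological}: once $M_R(X)$ is known to be a topological $R$-module, its defining universal property for maps into semitopological $R$-modules restricts \emph{a fortiori} to maps into topological $R$-modules, so $(M_R(X),i_X)$ is a free topological $R$-module over $X$. I expect the only genuinely delicate points to be the verification that $R$ is locally $\K$-bounded and that compact sets in $M$ are $R$-bounded—both of which hinge on local compactness of $R$—after which the universal-property bookkeeping identifying $M$ with $M^{\sharp\K}$ is routine.
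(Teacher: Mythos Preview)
Your proof is correct and follows essentially the same route as the paper's: form the $\K$-modification $M^{\sharp\K}$ for the bornology of precompact sets, use local compactness of $R$ together with Theorems~\ref{t3.2}(2), \ref{t5.5} and \ref{t5.7}(2) to see that $M^{\sharp\K}$ is a topological $R$-module into which $i_X$ remains continuous, and then invoke the universal property to identify $M$ with $M^{\sharp\K}$. Your version supplies a bit more detail (the strongly Baire justification for Theorem~\ref{t3.2}(2), the explicit uniqueness argument forcing $\bar i=\id$), but the argument is the same.
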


\begin{proof} Let $(M_R(X),i_X)$ be a free semitopological module over $X$ and $M_R(X)^{\sharp\K}$ be
its $\K$-mo\-di\-fi\-ca\-ti\-on with respect to the bornology $\K$ of precompact sets in $R$. Being
locally compact, the topological ring $R$ is locally $\K$-bounded. By Theorem~\ref{t5.5},
$M_R(X)^{\sharp\K}$ is a topological $R$-module. By Theorems~\ref{t3.2}(2) and \ref{t5.7}(2), the
canonical map $i_X:X\to M_R(X)$ remains continuous as a map $i_X:X\to M_R(X)^{\sharp\K}$. The
definition of the free semitopological $R$-module $M_R(X)$ guarantees that the identity map
$M_R(X)\to M_R(X)^{\sharp\K}$ is continuous and hence is a homeomorphism, which implies that
$M_R(X)=M_R(X)^{\sharp\K}$ is a topological $R$-module and a free topological $R$-module over $X$.
\end{proof}

Finally, we present an example of a topological space $X$ whose free semitopological $R$-module is not a topological $R$-module.

\begin{proposition} Let $R$ be a topological ring and $X$ be a semitopological $R$-module which is not a topological $R$-module. Then the free semitopological $R$-module $M_R(X)$ over $X$ is not a topological $R$-module.
\end{proposition}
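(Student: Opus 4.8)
The plan is to use the universal property of the free semitopological $R$-module to obtain a continuous $R$-linear retraction of $M_R(X)$ onto $X$, and then to factor the multiplication of $X$ through the multiplication of $M_R(X)$. Since the latter is jointly continuous whenever $M_R(X)$ is a topological $R$-module, this will force the multiplication of $X$ to be jointly continuous as well, contradicting the assumption on $X$.

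First I would apply the defining universal property of $(M_R(X),i_X)$ to the identity map $\id_X\colon X\to X$, regarded as a continuous map from the topological space $X$ into the semitopological $R$-module $X$. This produces a (unique) continuous $R$-homomorphism $q\colon M_R(X)\to X$ with $q\circ i_X=\id_X$; in particular $q$ is a continuous $R$-linear retraction onto the copy $i_X(X)$ of $X$.

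Next, arguing by contradiction, I would assume that $M_R(X)$ is a topological $R$-module, so that its multiplication map $*\colon R\times M_R(X)\to M_R(X)$ is jointly continuous. Consider the composition
$$ q\circ {*}\circ(\id_R\times i_X)\colon R\times X\to X. $$
Each of the three maps is continuous: $\id_R\times i_X$ because $i_X$ is continuous, the middle map by the assumed joint continuity, and $q$ by its construction. Using the $R$-linearity of $q$ together with $q\circ i_X=\id_X$, this composition sends a pair $(r,x)$ to $q(r*i_X(x))=r*q(i_X(x))=r*x$, so it coincides with the multiplication map of $X$. Hence the multiplication of $X$ is jointly continuous and $X$ is a topological $R$-module, contradicting the hypothesis.

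There is essentially no serious obstacle here; the one point that needs care is to verify that the displayed composition genuinely reproduces the original multiplication of $X$ rather than some other map, and this rests precisely on the $R$-linearity of $q$ and on the retraction identity $q\circ i_X=\id_X$.
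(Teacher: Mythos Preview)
Your proof is correct and essentially identical to the paper's: both apply the universal property to $\id_X$ to obtain a continuous $R$-homomorphism $q\colon M_R(X)\to X$ with $q\circ i_X=\id_X$, and then factor the multiplication of $X$ as $q\circ{*}\circ(\id_R\times i_X)$ to deduce its joint continuity from that of $M_R(X)$. The paper additionally remarks that $i_X$ is a topological embedding, but this is not actually used in the argument, so your omission of it is harmless.
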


\begin{proof} The semitopological $R$-module $X$ is a Tychonoff space (being a Hausdorff topological
group), which implies that the canonical map $i_X:X\to M_R(X)$ is a topological embedding. By the
definition of a free semitopological $R$-module, there exists a unique continuous $R$-homomorphism
$h:M_R(X)\to X$ such that $h\circ i_X:X\to X$ is the identity map of $X$. Assuming that $M_R(X)$ is
a topological $R$-module, we conclude that the multiplication map $*:R\times M_R(X)\to M_R(X)$ is
continuous and then the map $\mu:R\times X\to X$, $\mu:(a,x)\mapsto h(a*i_X(X))=a*h(i_X(x))=a* x$
is continuous, too. On the other hand, this map is discontinuous because $X$ is
not a topological $R$-module.
\end{proof}

\section{Acknowledgements}

The authors express their sincere thanks to V.V.~Mykhaylyuk and O.V.~Maslyuchenko for stimulating
discussions concerning co-Namioka spaces.
\newpage

\end{document}